\newcommand\@notni[2]{\mathrel{\rotatebox[y=#1]{180}{$#2\notin$}}}
\newcommand\notni{
\mathchoice
  {\@notni{0.57ex}\displaystyle}
  {\@notni{0.57ex}\textstyle}
  {\@notni{0.39ex}\scriptstyle}
  {\@notni{0.26ex}\scriptscriptstyle}
}
\newtheorem{thm}{Theorem}[section]
\newtheorem{cor}[thm]{Corollary}
\newtheorem{lem}[thm]{Lemma}
\newtheorem{prop}[thm]{Proposition}
\theoremstyle{definition}
\newtheorem{dfn}[thm]{Definition}
\newtheorem{rmk}[thm]{Remark}
\newtheorem{ques}[thm]{Question}
\newtheorem{conj}[thm]{Conjecture}
\newtheorem{ex}[thm]{Example}
\newtheorem*{claim*}{Claim}
\theoremstyle{remark}
\numberwithin{equation}{thm}
\def\depth{\operatorname{depth}}
\def\Ext{\operatorname{Ext}}
\def\Hom{\operatorname{Hom}}
\def\im{\operatorname{im}}
\def\ker{\operatorname{ker}}
\def\ll{\ell\ell}
\def\m{\mathfrak{m}}
\def\n{\mathfrak{n}}
\def\p{\mathfrak{p}}
\def\a{\mathfrak{a}}
\def\pd{\operatorname{pd}}
\def\soc{\operatorname{soc}}
\def\syz{\Omega}
\def\Tor{\operatorname{Tor}}
\def\Tr{\operatorname{Tr}}
\begin{document}

\title[On a class of Burch ideals and a conjecture of Huneke and Wiegand]{On a class of Burch ideals and a \\conjecture of Huneke and Wiegand }


\author[O. Celikbas]{Olgur Celikbas}
\address{Olgur Celikbas\\
Department of Mathematics \\
West Virginia University\\
Morgantown, WV 26506-6310, U.S.A}
\email{olgur.celikbas@math.wvu.edu}

\author[T. Kobayashi]{Toshinori Kobayashi} 
\address{Toshinori Kobayashi\\Graduate School of Mathematics, Nagoya University, Furocho, Chikusaku, Nagoya, Aichi 464- 8602, Japan
}
\email{m16021z@math.nagoya-u.ac.jp}

\subjclass[2010]{Primary 13D07; Secondary 13H10, 13D05, 13C12}
\keywords{Burch ideals, weakly $\m$-full ideals, tensor products, Tor and torsion} 
\thanks{Kobayashi's research was supported by JSPS Grant-in-Aid for JSPS Fellows 18J20660}
\maketitle{}

\begin{abstract} In this paper we are concerned with a long-standing conjecture of Huneke and Wiegand. We introduce a new class of ideals and prove that each ideal from such class satisfies the  conclusion of the conjecture in question. We also study the relation between the class of Burch ideals and that of the ideals we define, and construct several examples that corroborate our results.
\end{abstract} 

\setlength{\baselineskip}{15pt}
\section{Introduction}

Throughout $R$ denotes commutative, Noetherian local ring with unique maximal ideal $\m$ and residue field $k$. Moreover, all $R$-modules are assumed to be finitely generated. 

The present paper addresses the ideal case of a long-standing conjecture of Huneke and Wiegand; see \cite[4.6 and the discussion following the proof of 5.2]{HW1}. The version of the conjecture we are concerned with can be stated as follows:

\begin{conj} (Huneke and Wiegand \cite{HW1}) \label{HWC} If $R$ is a one-dimensional local domain and $I$ is a non-principal ideal of $R$, then $I\otimes_RI^{\ast}$ has (nonzero) torsion, where  $I^{\ast}=\Hom_R(I,R)$.
\end{conj}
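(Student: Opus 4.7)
The aim is to show that, in a one-dimensional local domain $R$, any non-principal ideal $I$ produces a tensor product $I\otimes_R I^{\ast}$ with nonzero torsion submodule. Because $R$ is one-dimensional Cohen-Macaulay, a finitely generated $R$-module is torsion-free precisely when it is maximal Cohen-Macaulay, and both $I$ and $I^{\ast}$ belong to this class. So I would argue the contrapositive: assume $I\otimes_R I^{\ast}$ is torsion-free and deduce that $I$ must be principal.

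The natural homological tool is the four-term exact sequence
\[
0\longrightarrow \Tor_1^R(I,I^{\ast})\longrightarrow \syz I\otimes_R I^{\ast}\longrightarrow (I^{\ast})^{n}\longrightarrow I\otimes_R I^{\ast}\longrightarrow 0
\]
produced by tensoring $0\to\syz I\to R^{n}\to I\to 0$ with $I^{\ast}$, together with the observation that $\Tor_1^R(I,I^{\ast})$ has finite length (it localises to zero at the generic point of $R$, where $I$ becomes free). If $I\otimes_R I^{\ast}$ is torsion-free, a depth-lemma chase along this sequence, and a parallel one with the roles of $I$ and $\syz I$ swapped, is intended to force $\Tor_1^R(I,I^{\ast})=0$. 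Once such a vanishing is in hand, the problem reduces to the following rigidity question, which lies at the heart of the conjecture: does $\Tor_1^R(I,I^{\ast})=0$ imply that $I$ is principal?

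The main obstacle is precisely this rigidity step. In the Gorenstein case, reflexivity of MCM modules gives $I^{\ast\ast}\cong I$, and the original Huneke-Wiegand theorem on tensor products of MCM modules over a one-dimensional Gorenstein ring shows that such a Tor-vanishing forces $I$ (or $I^{\ast}$) to be free, hence principal. Without the Gorenstein hypothesis, however, $I^{\ast}$ can fail to be reflexive, the canonical module enters the duality in a nontrivial way, and the connection between $\Tor_1^R(I,I^{\ast})$ and $\pd_R I$ collapses. This is exactly why Conjecture~\ref{HWC} has remained open.

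A tractable partial attack, in the spirit of the paper, is to sidestep the general rigidity question and instead construct the required torsion element in $I\otimes_R I^{\ast}$ explicitly. Burch-type conditions (such as $\m x\subseteq I$ for some $x\in\m\setminus I$, or the failure of the equality $\m I=\m\cap I$) provide a canonical socle element in $(I:\m)/I$, and pairing this element against an appropriate generator of $I^{\ast}$ produces a candidate element of $\soc(I\otimes_R I^{\ast})$. The technical crux, and the step I expect to be hardest, is verifying that this candidate is genuinely nonzero in the tensor product: this amounts to a controlled $\Tor_1$-computation that must leverage both the Burch-type hypothesis and the assumption that $I$ is not principal.
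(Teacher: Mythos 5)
This is a conjecture, not a theorem; the paper does not prove it, and you correctly stop short of claiming a proof. So there is no proof in the paper to check you against. What the paper does establish (Theorem~\ref{thmintro}, via Corollary~\ref{cor2}) is the conjecture's conclusion for a restricted class: ideals $I$ that are weakly $\m$-full with respect to some $J$ satisfying $0\neq I\subseteq\m J$.

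A few substantive remarks on your sketch. The depth-chase you gesture at does not obviously close: in the four-term sequence you write, $\Tor_1^R(I,I^{\ast})$ sits inside $\syz I\otimes_R I^{\ast}$, which you have not shown to be torsion-free, so you cannot kill the kernel merely because it is torsion. The paper's reduction is cleaner and, importantly, targets a lower-degree $\Tor$: tensoring $0\to I\to R\to R/I\to 0$ with $I^{\ast}$ gives an injection $\Tor_1^R(R/I,I^{\ast})\hookrightarrow I\otimes_R I^{\ast}$, and this $\Tor$ is torsion since $R/I$ is, hence vanishes once $I\otimes_R I^{\ast}$ is torsion-free. The degree matters: feeding $\Tor_1^R(R/I,I^{\ast})=0$ into Corollary~\ref{cor1} with $t=1$ yields $\pd_R(I^{\ast})<1$ outright, whereas $\Tor_1^R(I,I^{\ast})=\Tor_2^R(R/I,I^{\ast})$ would only give $\pd_R(I^{\ast})<2$. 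You also assert that the one-dimensional Gorenstein case of the conjecture is settled by Huneke--Wiegand; it is not. \cite[3.1]{HW1} is a theorem about hypersurfaces, and the paper remarks explicitly that the conjecture is open even for Gorenstein $R$ with $I$ minimally two-generated. Finally, your proposed ``partial attack'' --- exhibiting a nonzero socle element of $I\otimes_R I^{\ast}$ directly from a Burch-type condition --- is not the mechanism the paper uses. The paper's route is a $\Tor$-rigidity theorem (Theorem~\ref{thmnew}, packaged as Corollary~\ref{cor1}) for weakly $\m$-full-with-respect-to-$J$ ideals: applied with $M=I^{\ast}$ and $t=1$ it forces $I^{\ast}$ free, and a standard Auslander-transpose argument then forces $I$ free and hence $R$ regular, a contradiction. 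The torsion is detected indirectly through projective dimension, not by constructing an explicit torsion element.
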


The motivation of Conjecture \ref{HWC} comes from a celebrated conjecture of Auslander and Reiten \cite{AuBr}, as well as from the pioneering work of Huneke and Wiegand \cite{HW1}. There are several partial affirmative results concerning Conjecture \ref{HWC} in the literature; for example, it is remarkable that Conjecture \ref{HWC} holds over hypersurface rings \cite[3.1]{HW1} and over Cohen-Macaulay local rings of minimal multiplicity \cite[3.6]{HWS}. However, there is not much known about the conjecture, in general. In fact, Conjecture \ref{HWC} remains open, even if $R$ is Gorenstein and $I$ is minimally generated by two elements. An affirmative answer in this direction was initially established in \cite{Kurt}: Conjecture \ref{HWC} holds if $R$ is Gorenstein, and both $I$ and $I^{\ast}$ are minimally generated by two elements; this fact implies that Conjecture \ref{HWC} is true over Gorenstein domains of multiplicity at most five; see also \cite[1.5]{Taniguchi} and  \cite[3.1]{HWS}.

The aim of this paper is to make progress on Conjecture \ref{HWC}: we introduce a new class of ideals over local rings, and prove that each ideal from such class satisfies the torsion conclusion of Conjecture \ref{HWC}; see  Theorems \ref{thmnew} and \ref{t3}, and also Corollary \ref{cor1}. A consequence of our main result can be stated as follows; see Corollary \ref{cor2}.

\begin{thm} \label{thmintro} Let $(R, \m)$ be a one-dimensional local domain which is not regular, and let $I$ and $J$ be ideals of $R$. Assume $0\neq I\subseteq \m J$ and $(I:_RJ)=(\m I:_R \m J)$. Then $I\otimes_RI^{\ast}$ has (nonzero) torsion. 
\end{thm}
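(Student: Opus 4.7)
The plan is to deduce Theorem \ref{thmintro} as a direct consequence of the paper's central result (Theorem \ref{t3}; see also Corollary \ref{cor1}), which asserts that every ideal in a newly-introduced class of ``generalized Burch'' ideals satisfies the torsion conclusion of Conjecture \ref{HWC} over a one-dimensional non-regular local domain. The defining membership criterion of this class should be precisely the existence of an auxiliary ideal $J$ with $0 \neq I \subseteq \m J$ and $(I :_R J) = (\m I :_R \m J)$, so the hypotheses of the theorem display $I$ as a member of the class. The proof then consists of two short steps: record the membership of $I$ via the given pair $(I,J)$, and apply Theorem \ref{t3}, for which the remaining hypotheses (one-dimensional local domain, $R$ non-regular) are exactly those assumed.

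The substantive content lies in the proof of Theorem \ref{t3}, whose structure I would expect to proceed as follows. The annihilator equality $(I :_R J) = (\m I :_R \m J)$, used together with the natural comparison between $\m J/\m I$ and $J/I$, furnishes a mechanism by which the residue field $k = R/\m$ appears as a direct summand of a suitable syzygy module of $R/I$ (or dually of $I$), in the spirit of the Burch-type summand-extraction of Dao--Kobayashi--Takahashi. Starting from a free presentation $0 \to \syz_R I \to F \to I \to 0$ and tensoring with $I^*$, such a $k$-summand, combined with the one-dimensional domain hypothesis, forces nonzero torsion in $I \otimes_R I^*$ via standard rigidity analyses of $\Tor_1^R(I, I^*)$; the non-regularity of $R$ is needed to exclude the degenerate case in which $I$ is principal and $I \otimes_R I^*$ is tautologically torsion-free.

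I expect the main obstacle to be the module-theoretic passage from the symbolic annihilator equality $(I :_R J) = (\m I :_R \m J)$ to an explicit direct-summand decomposition of a syzygy. Executing this requires careful control of the kernel and cokernel of the map $\m J/\m I \to J/I$, in particular of the interplay among $\m I$, $\m J \cap I$ and $I$; it is precisely this new interplay---generalizing the classical Burch condition---that motivates the introduction of the class and is the crux around which the theorem is organized.
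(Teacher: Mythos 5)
Your high-level plan is right: Theorem~\ref{thmintro} is indeed deduced in the paper from the later results, as the hypotheses display $I$ as weakly $\m$-full with respect to $J$ with $0\neq I\subseteq \m J$. However, the statement you attribute to ``Theorem~\ref{t3} (see also Corollary~\ref{cor1})'' is not what those results say, and the two-step deduction you propose (``record membership, apply Theorem~\ref{t3}'') skips the actual work. Theorem~\ref{t3} is not a torsion statement: under the membership hypotheses it asserts only that $\Tor^R_t(M,R/I)=0$ forces $J\cdot\Omega^t_R(M)=0$. Upgrading this to the Tor-rigidity conclusion $\pd_R(M)<t$ (Corollary~\ref{cor1}) requires the separate Theorem~\ref{thmnew}, proved by induction on $\ell_R(R/J)$, replacing $J$ by $(J:_R\m)$ at each step and crucially using $\soc(R)\subseteq I$ (automatic here since $\depth R=1$). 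Only then does the paper's Corollary~\ref{cor2} deliver the torsion: one tensors $0\to I\to R\to R/I\to 0$ with $I^*$, deduces $\Tor_1^R(R/I,I^*)=0$ from the assumed torsion-freeness of $I\otimes_RI^*$, concludes $I^*$ is free by Corollary~\ref{cor1}, and runs the Auslander-transpose argument to get $I\cong I^{**}$ free, contradicting non-regularity. This last portion is roughly what you sketch, but you would be tensoring the wrong sequence (a presentation of $I$ rather than $0\to I\to R\to R/I\to 0$) and you omit the Auslander-transpose step needed to pass from $I^*$ free to $I$ free.

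Your description of the internal mechanism of Theorem~\ref{t3} also does not match the paper. You expect a Burch-style extraction of a $k$-summand from a syzygy, in the spirit of Dao--Kobayashi--Takahashi, but no such decomposition appears in the proof. Instead, the paper works directly with a minimal free resolution of $M$ reduced modulo $I$: assuming $J\cdot\im(\overline{\partial}_t)\neq 0$, it chooses an element $a\in\m^{u-1}\im(\partial_t)\setminus(I:_RJ)F_{t-1}$ at the critical length threshold, propagates it through $\ker(\overline{\partial}_t)=\im(\overline{\partial}_{t+1})$ (this is where $\Tor^R_t=0$ enters) and lands in $(\m I:_R\m J)F_{t-1}=(I:_RJ)F_{t-1}$, a contradiction; the modular law and Nakayama then give $J\cdot\Omega^t_R(M)=0$. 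The fact that these ideals are Burch (Proposition~\ref{prop4}) is proved separately in Section 3 via Loewy-length comparisons and is \emph{not} used in establishing Theorem~\ref{t3} or Corollary~\ref{cor2}; indeed Burch's own rigidity theorem requires the vanishing of two consecutive Tor's and would not suffice here.
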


Theorem \ref{thmintro} implies that each integrally closed ideal, $\m$-full ideal, weakly $\m$-full ideal, and each ideal of the form $\m K$, where $K$ is an ideal of $R$, (in particular each power of the maximal ideal $\m$) satisfies the torsion conclusion of Conjecture \ref{HWC}; see Proposition \ref{propintclosed} and Corollary \ref{cor4}. Therefore, Theorem \ref{thmintro} recovers and extends the main theorems of
\cite{CGTT} and \cite{CT} concerning Conjecture \ref{HWC}; see \cite[2.17]{CGTT} and \cite[1.5]{CT}. 


An ideal $I$ defined as in Theorem \ref{thmintro} is a Burch ideal. The notion of a Burch ideal is a newcommer which has been recently defined and studied by Dao, Kobayashi and Takahashi in \cite{DKT}; see Definitions \ref{wmf}, \ref{burch} and Proposition \ref{prop4}. Burch ideals enjoy interesting properties and they are of interest to us. Therefore, besides making progress on Conjecture \ref{HWC}, we examine the relationship between the ideals we introduce in Theorem \ref{thmintro} and Burch ideals; see Section 3. Furthermore, in section 4, we construct various new examples, including examples of Burch ideals which are not weakly $\m$-full in the sense of \cite[3.7]{CIST}; see also Definition \ref{wmf}.


\section{Remarks, Main Results and Corollaries}
Recall that an ideal $I$ of a local ring $(R, \m)$ is called \emph{weakly $\m$-full} \cite{CIST} provided that $I=(\m I:_R\m)$. In \cite{CIST} weakly $\m$-full ideals were studied; there it was proved that each weakly $\m$-full ideal satisfies the torsion conclusion of Conjecture \ref{HWC}. The aim of this paper is to generalize the notion of a weakly $\m$-full ideal, and introduce a new class of ideals which, in particular, support Conjecture \ref{HWC}. For that we define:

\begin{dfn}\label{wmf} Let $R$ be a local ring, and let $I$ and $J$ be ideals of $R$.
We say $I$ is \textit{weakly $\m$-full with respect to $J$} provided that $(I:_RJ)=(\m I:_R \m J)$.
\end{dfn}

Note, according to Definition \ref{wmf}, if $I$ is a proper ideal of $R$, then $I$ is weakly $\m$-full with respect to $R$ if and only if $I=(\m I:_R\m)$, i.e., $I$ is weakly $\m$-full in the sense of\cite[3.7]{CIST}. Hence  Definition \ref{wmf} can be viewed as an extension of the notion of a weakly $\m$-full ideal.

In this section we give some examples of weakly $\m$-full ideals with respect to other ideals. More precisely, we prove that weakly $\m$-full ideals, as well as integrally closed ideals over local rings of positive depth, are weakly $\m$-full with respect to $\m^s$ for each $s\geq 0$; see Remark \ref{l5} and Proposition \ref{propintclosed}. Subsequently, we prove our main results, namely Theorems \ref{thmnew} and \ref{t3}, and give several corollaries of our argument. To wit, Corollary \ref{cor1} is the main consequence: it establishes a Tor-rigidity property of the class of ideals considered in Theorem \ref{thmintro}. Similarly, Corollary \ref{cor2} proves the claim given in Theorem \ref{thmintro} and Corollary \ref{cor4} lists two new classes of ideals satisfying the torsion conclusion of Conjecture \ref{HWC}.

\subsection*{Some examples of weakly $\m$-full ideals with respect to other ideals.}

Examples of weakly $\m$-full ideals with respect to other ideals are abundant: for example, each weakly $\m$-full ideal -- besides being weakly $\m$-full with respect to $R$ -- is also weakly $\m$-full with respect to $\m^s$ for each $s\geq 0$. We record this fact next:

\begin{rmk} \label{l5} Let $(R,\m)$ be a local ring and let $I$ be an ideal of $R$. If $(I:_R\m^s)=(\m I:_R\m^{s+1})$ for some integer $s\geq 0$, then, for each integer $u \geq s$, it follows that 
\begin{equation}\tag{\ref{l5}.1}
(I:_R\m^u)=\big((I:_R\m^s):_R\m^{u-s}\big)=(\m I:_R\m^{s+1}):_R\m^{u-s}\big)=(I:_R\m^{u+1}).
\end{equation}
The second equality in (\ref{l5}.1) is due to the fact that, if $L$ is an ideal of $R$, and $a$ and $b$ are nonnegative integers, then the equality $\big((L:_R\m^a):_R\m^{b}\big)=(L:_R\m^{a+b})$ holds.

Note (\ref{l5}.1) implies that, if $I$ is weakly $\m$-full, then $I$ is weakly $\m$-full with respect to $\m^i$ for each integer $i\geq 0$. 
\pushQED{\qed} 
\qedhere
\popQED	
\end{rmk}

Another source of weakly $\m$-full ideals is the class of integrally closed ideals: Goto \cite[2.4]{Goto87} proved that, if $k$ is infinite, then non-nilpotent integrally closed ideals are $\m$-full, and hence are weakly $\m$-full. Next we obtain a similar result, which is independent of the finiteness of the residue field, over local rings of positive depth. To prove this, we make use of the next equality which follows by the definition of colon ideals.

\begin{rmk}\label{rmkm} If $(R,\m)$ is a local ring and $I$ is an ideal of $R$, then $\m I=\m(\m I:_R\m)$. 
\end{rmk}

Throughout $\soc(R)$ denotes the \emph{socle} of a local ring $(R,\m)$, i.e., $\soc(R)=(0:_R\m)$.

\begin{prop} \label{propintclosed} Let $(R,\m)$ be a local ring and let $I$ be an integrally closed ideal of $R$. Assume $\depth(R)>0$. Then $I$ is weakly $\m$-full. In particular, $I$ is weakly $\m$-full with respect to $\m^s$ for each integer $s\geq 0$.
\end{prop}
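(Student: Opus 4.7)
The plan is to establish the inclusion $(\m I :_R \m) \subseteq I$ directly (the reverse inclusion is automatic from $I \cdot \m \subseteq \m I$). Once $I = (\m I :_R \m)$ is proved, the "in particular" clause follows immediately from Remark \ref{l5} applied with $s = 0$.

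The strategy for $(\m I :_R \m) \subseteq I$ is to show that any $x$ in $(\m I :_R \m)$ is integral over $I$, and then invoke integral closedness. The first step is to observe that the depth hypothesis forces $\soc(R) = 0$: any nonzero element of the socle would be annihilated by $\m$, so $\m$ would consist entirely of zero-divisors and thus be an associated prime of $R$, contradicting $\depth(R) > 0$. Equivalently, $\m$ is a \emph{faithful} finitely generated $R$-module. Then for $x \in (\m I :_R \m)$, writing $\m = (y_1, \ldots, y_n)$ gives equations $x y_i = \sum_{j} a_{ij} y_j$ with $a_{ij} \in I$, and a standard determinant trick produces $\det(x\id - A) \cdot y_i = 0$ for all $i$, where $A = (a_{ij})$. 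Faithfulness of $\m$ then forces $\det(x\id - A) = 0$, which expands to a monic relation $x^n + c_1 x^{n-1} + \cdots + c_n = 0$ with $c_k \in I^k$. So $x$ is integral over $I$, and integral closedness of $I$ yields $x \in I$.

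I do not anticipate a serious obstacle here; the whole argument is essentially the classical Cayley--Hamilton / determinant trick characterization of integral dependence, combined with the elementary fact that $\depth(R) > 0$ kills the socle. The one point worth being careful about is to justify faithfulness of $\m$ explicitly (rather than just quoting it), since this is exactly where the hypothesis $\depth(R) > 0$ enters. After that, the rest is formal: the computation of coefficients $c_k \in I^k$ from the characteristic polynomial of $A$ is routine, and the reduction of the "in particular" statement to Remark \ref{l5} is a one-line citation.
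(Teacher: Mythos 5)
Your argument is correct and is essentially the same as the paper's: both reduce to showing $\soc(R)=0$ (hence $\m$ is faithful), and then apply the determinantal trick to conclude that any $x\in(\m I:_R\m)$ is integral over $I$, hence in $I$ by integral closedness. The paper routes this through the observation $\m I=\m(I+(x))$ and cites \cite[1.8]{HS}, whereas you unwind the determinant trick explicitly via the matrix $A$ and its characteristic polynomial; this is a presentational difference only.
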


\begin{proof} Note that, since $R$ has positive depth, we have $\soc(R)=0$ so that $\m$ is a faithful $R$-module. Let $x \in (\m I:_R\m)$. Then it follows that $\m I\subseteq \m (I+(x)) \subseteq \m (\m I:_R\m)=\m I$, where the last equality is due to Remark \ref{rmkm}. Therefore we conclude that $\m I=\m (I+(x))$. Now, by applying the determinantal trick (see, for example, \cite[1.8]{HS}),
we deduce that $x$ is integral over $I$. Hence, since $I$ is integrally closed, $x$ belongs to $I$. This proves $(\m I:_R\m)\subseteq I$, i.e. $(\m I:_R\m)=I$ and $I$ is weakly $\m$-full.
\end{proof}

\subsection*{Main results.} Next we prove our main results, which are Theorem \ref{thmnew} and Theorem \ref{t3}. Along the way, we also observe a result concerning Tor-rigidity; see Proposition \ref{propTor}.

In the following $\syz^t_R(M)$ denotes the $t$-th \emph{syzygy} module of $M$, which is  the image of the $t$-th differential map in a minimal free resolution of $M$. Note that $\syz^t_R(M)$ is uniquely determined up to isomorphism, since so is a minimal free resolution of $M$.

\begin{thm} \label{thmnew} Let $(R,\m)$ be a local ring, $M$ be a finitely generated $R$-module, $I$ and $J$ be ideals of $R$, and let $t\ge 0$ be an integer.
Assume the following conditions hold:
\begin{enumerate}[\rm(i)]
\item Either $J=R$ or $J$ is $\m$-primary, and $J \cdot \Omega_R^t(M)=0$.
\item $\soc(R)\subseteq I\subseteq \m(J:_R\m)$.
\end{enumerate}
If $\Tor^R_t(M,R/I)=0$, then it follows that $\pd_R(M)<t$.
\end{thm}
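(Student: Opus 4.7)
The plan is to assume $\Tor_t^R(M,R/I)=0$ and show $N := \Omega^t_R(M) = 0$. First I record that $I \subseteq \m(J:_R\m) \subseteq \m$, so Nakayama will be available. Two easy cases can be disposed of immediately: if $J = R$, then condition (i) directly gives $N = 0$, and if $t = 0$, then $M/IM = \Tor_0^R(M,R/I) = 0$ forces $M = 0$ by Nakayama. So I may assume $J$ is $\m$-primary and $t \geq 1$, and proceed by contradiction, supposing $N \neq 0$.

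From the short exact sequence $0 \to N \to F_{t-1} \to \Omega^{t-1}_R(M) \to 0$ coming from a minimal free resolution of $M$, tensoring with $R/I$ and using $\Tor_1^R(F_{t-1},R/I) = 0$ yields the standard identification
\[
\Tor_t^R(M, R/I) \;\cong\; (N \cap IF_{t-1})/IN.
\]
The vanishing hypothesis therefore reads $N \cap IF_{t-1} = IN$, and the strategy is to exhibit a nonzero element of $N$ that lies in $IF_{t-1}$ but not in $IN$.

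The key step is to prove $IN = 0$. For any $k \in (J:_R\m)$ and $n \in N$, hypothesis (i) gives $\m(kn) = (\m k)n \subseteq Jn = 0$, so $kn \in (0:_N\m)$; therefore $\m(J:_R\m) \cdot N \subseteq \m \cdot (0:_N\m) = 0$, and $I \subseteq \m(J:_R\m)$ yields $IN = 0$. To finish, since $J$ is $\m$-primary, $\m^s N = 0$ for some $s \geq 1$, so $N \neq 0$ forces $(0:_N\m) \neq 0$; any nonzero $n \in (0:_N\m)$ lies in $(0:_{F_{t-1}}\m) = \soc(R) \cdot F_{t-1} \subseteq IF_{t-1}$ by $\soc(R) \subseteq I$, so $n \in (N \cap IF_{t-1}) \setminus IN$, contradicting $\Tor_t^R(M,R/I) = 0$.

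The main substantive step is the identity $IN = 0$; it crucially combines the upper containment $I \subseteq \m(J:_R\m)$ from (ii) with the annihilator condition in (i). The complementary containment $\soc(R) \subseteq I$ from (ii) then does nothing more than force the socle of $N$ into $IF_{t-1}$, which is precisely what manufactures the contradiction.
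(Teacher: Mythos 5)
Your proof is correct and takes a genuinely different, more direct route than the paper's. The paper proceeds by induction on $\ell_R(R/J)$: at each stage it replaces $J$ by $(J:_R\m)$, checks that the three hypotheses are preserved, and terminates when $J$ reaches $R$. The work there is concentrated in showing $(J:_R\m)\cdot\Omega^t_R(M)=0$ at each step, which uses the vanishing of $\Tor_t$ together with Nakayama's lemma and then feeds back into the induction hypothesis. Your argument dispenses with the induction entirely. The crucial observation is that $IN=0$ is \emph{automatic} from the hypotheses and uses no homological input at all: $I\subseteq\m(J:_R\m)\subseteq J$ and $J\cdot N=0$ (your derivation via $(J:_R\m)N\subseteq(0:_N\m)$ amounts to the same thing). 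You then spend the Tor-vanishing hypothesis exactly once: since $J$ is $\m$-primary, $N$ has finite length, so $N\neq0$ would force $\soc(N)\neq 0$; but $\soc(N)\subseteq(0:_{F_{t-1}}\m)=\soc(R)F_{t-1}\subseteq IF_{t-1}$ while $IN=0$, so any nonzero socle element gives a nonzero class in $(N\cap IF_{t-1})/IN\cong\Tor^R_t(M,R/I)$, a contradiction. This isolates cleanly what the Tor-vanishing actually delivers (namely, $\soc(N)=0$, hence $N=0$), whereas the paper re-runs the Tor argument at every step of the replacement $J\leadsto(J:_R\m)$. Both proofs rest on the same ingredients — the identification of $\Tor_t$ with $(N\cap IF_{t-1})/IN$ and the containment $\soc(R)F_{t-1}\subseteq IF_{t-1}$ — but yours assembles them into a single pass rather than an induction.
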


\begin{proof} Note, if $t=0$, then the assumption $\Tor^R_t(M,R/I)=0$ implies $M=0$ so that we have $\pd_R(M)<t$. Hence we assume $t\geq 1$, and proceed by induction on the length $\ell_R(R/J)$ of the $R$-module $R/J$.

If $\ell_R(R/J)=0$, then $J=R$ so that $J \cdot \Omega_R^t(M)= \Omega_R^t(M)=0$ and hence $\pd_R(M)<t$. So we assume $\ell(R/J)\geq 1$. In particular, as $J\neq R$, we assume $J$ is an $\m$-primary ideal of $R$. 
Therefore we have $0\neq \soc(R/J)=(J:_R\m)/J$ and hence $J\subsetneqq (J:_R\m)$. This implies $\ell_R(R/J)-\ell_R(R/(J:\m))=\ell_R((J:_R\m)/J)\geq 1$ so that $\ell(R/(J:\m))<\ell(R/J)$. Now our 
aim is to replace the ideal $J$ with $(J:_R\m)$ and conclude by the induction hypothesis that $\pd_R(M)<t$. For that we need to observe that the ideal $(J:_R\m)$ satisfies the same hypotheses as $J$ does.

Claim 1. The ideal $(J:_R\m)$ is $\m$-primary or equals $R$.\\
Proof of Claim 1. The claim is clear since $J$ is $\m$-primary and $J\subseteq (J:_R\m)$.

Claim 2. We have that $I\subseteq \m((J:_R\m):_R\m)$.\\
Proof of Claim 2. As $J\subseteq (J:_R\m)$, it follows $(J:_R\m) \subseteq ((J:_R\m):_R\m)$ and hence that $I\subseteq \m(J:_R\m)\subseteq \m((J:_R\m):_R\m)$, as claimed.

Claim 3. We have that $(J:_R\m) \cdot \Omega_R^t (M)=0$.\\
Proof of Claim 3. Set $N=\Omega_R^t(M)$, and consider a minimal free resolution of $M$:
$$F:\cdots \to F_{t+1} \xrightarrow[]{\partial_{t+1}} F_t \xrightarrow[]{\partial_{t}} F_{t-1} \to \cdots \to F_0 \to 0.$$ 
Then there is a short exact sequence of $R$-modules of the form:
\begin{equation}\tag{\ref{thmnew}.1}
0\to N \xrightarrow[]{\alpha} F_{t-1} \to \Omega_R^{t-1}(M) \to 0.
\end{equation}
As $\m(J:_R\m)\subseteq J$ and $J\cdot N=0$, we have $\m \alpha((J:_R\m) \cdot N)=\alpha(\m(J:_R\m) \cdot N)\subseteq\alpha(J \cdot N)=0$ so that $\alpha((J:_R\m) \cdot N)\subseteq \soc (R) \cdot F_{t-1}$.

We tensor the short exact sequence (\ref{thmnew}.1) with $R/I$ and obtain the exact sequence $\Tor^R_1(\syz^{t-1} M,R/I)\to N/I\cdot N \xrightarrow[]{\alpha\otimes_R R/I} F_{t-1}/I \cdot  F_{t-1}$. We conclude, since $\Tor^R_t(M,R/I)$, which is isomorphic to $\Tor^R_1(\syz^{t-1} M,R/I)$, vanishes, that $\alpha\otimes_R R/I$ is injective. 

As $\alpha((J:_R\m) \cdot N)\subseteq \soc (R) \cdot F_{t-1}$ and $\soc(R)\subseteq I$, it follows that
\begin{equation}\tag{\ref{thmnew}.2}
(\alpha\otimes_R R/I)\big((J:_R\m)(N/I \cdot N)\big) \subseteq \soc(R) (F_{t-1}/I \cdot F_{t-1})=0. 
\end{equation}

It now follows from (\ref{thmnew}.2) that $(J:_R\m)(N/IN)=0$, i.e., $(J:_R\m)N\subseteq I \cdot N$, because the map $\alpha\otimes_R R/I$ is injective. This fact, along with our assumption $I\subseteq \m(J:_R\m)$, yields
\begin{equation}\tag{\ref{thmnew}.3}
I \cdot N \subseteq \m(J:_R\m) \cdot N\subseteq (J:_R\m) \cdot N \subseteq I \cdot N \text{ so that }\m(J:_R\m) \cdot N=(J:_R\m) \cdot N.
\end{equation}
So (\ref{thmnew}.3) and Nakayama's lemma implies $(J:_R\m) \cdot \Omega_R^t(M)=0$, as claimed. Consequently, in view of Claim 1, 2 and 3, we deduce by the induction hypothesis that $\pd_R(M)<t$.
\end{proof}

The hypothesis $\soc(R)\subseteq I$ in Theorem \ref{thmnew} is necessary; we point out that fact with an example in Remark \ref{rmksocle} but first we record a preliminary result which seems to be of independent interest as it concerns Tor-rigidity. Recall that a finitely generated $R$-module $M$ is called \emph{Tor-rigid} \cite{Au} provided that the vanishing of $\Tor_1^R(M, N)$ yields the vanishing of $\Tor_2^R(M, N)$ for each
$R$-module $N$.

\begin{prop}\label{propTor} Let $(R, \m)$ be a local ring, which is not a field, and let $I$ be a nonzero ideal of $R$ such that $\soc(R)\nsubseteq I$ (so that $\soc(R)\neq 0$, or equivalently $\depth(R)=0$.) Let $y\in R$ such that $I\notni y \in \soc(R)$. Then we have $\Tor_1^R(R/yR, R/I)=0\neq \Tor_2^R(R/yR, R/I)$ and $\pd_R(R/yR)=\infty$. In particular, the $R$-module $R/yR$ is not Tor-rigid.
\end{prop}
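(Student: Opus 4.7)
The plan is to exploit the short exact sequence $0\to yR\to R\to R/yR\to 0$ together with the fact that $y\in\soc(R)$ forces $yR\cong k$, and then read off the Tor-modules from a single dimension shift.

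First I would observe that $\soc(R)\nsubseteq I$ and $\soc(R)\neq 0$ together force $I\subsetneq R$ (otherwise $\soc(R)\subseteq R=I$), so $I\subseteq \m$. Next, since $y\in\soc(R)\setminus I$, in particular $y\neq 0$ and $\m y=0$, so $\ann_R(y)=\m$ and hence multiplication by $y$ yields an isomorphism $yR\cong R/\m=k$.

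The main step is to apply $-\otimes_R R/I$ to the short exact sequence
\[
0\to yR\to R\to R/yR\to 0,
\]
so that in the resulting long exact sequence every $\Tor_i^R(R,R/I)$ vanishes for $i\ge 1$. This yields
\[
\Tor_1^R(R/yR,R/I)=\ker\bigl(yR\otimes_R R/I\to R\otimes_R R/I\bigr)
\]
and, for every $i\ge 1$,
\[
\Tor_{i+1}^R(R/yR,R/I)\cong\Tor_i^R(yR,R/I)\cong\Tor_i^R(k,R/I).
\]
Using the isomorphism $yR\cong k$ and $I\subseteq\m$, I identify $yR\otimes_R R/I\cong R/I\otimes_R k\cong k$, and the induced map $k\to R/I$ is $1\mapsto\overline{y}$; since $y\notin I$ this is injective, which gives $\Tor_1^R(R/yR,R/I)=0$. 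For the second Tor, the standard computation from $0\to I\to R\to R/I\to 0$ tensored with $k$ (and $I\subseteq\m$) shows $\Tor_1^R(k,R/I)\cong I/\m I$, which is nonzero by Nakayama because $I\neq 0$. Therefore $\Tor_2^R(R/yR,R/I)\cong I/\m I\neq 0$.

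Finally, to see $\pd_R(R/yR)=\infty$, suppose to the contrary that it is finite. Since $\depth(R)=0$, the Auslander--Buchsbaum formula forces $\pd_R(R/yR)=0$, so $R/yR$ is free; being cyclic it must be either $0$ or $R$, and each possibility contradicts the fact that $y$ is a nonzero, nonunit element of $R$. The non-rigidity of $R/yR$ then follows immediately from the two Tor computations. The only mild subtlety is the identification of $yR\otimes_R R/I$ and the injectivity of the resulting map into $R/I$; everything else is a straightforward dimension shift and Nakayama argument.
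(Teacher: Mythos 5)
Your proof is correct and follows essentially the same route as the paper: both tensor the short exact sequence $0\to yR\to R\to R/yR\to 0$ with $R/I$, both exploit $yR\cong k$, and both use the dimension shift $\Tor_2^R(R/yR,R/I)\cong\Tor_1^R(k,R/I)\neq 0$. The only cosmetic differences are that the paper verifies $\Tor_1=0$ by checking $yR\cap I=0$ element-by-element (where you note the 1-dimensional source maps onto the nonzero class $\overline y$), and the paper deduces $\pd_R(R/yR)=\infty$ by citing that a finite projective dimension would force $yR$ to contain a regular element, whereas you invoke Auslander--Buchsbaum directly.
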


\begin{proof} Notice, since $\m \neq 0$ and $y \cdot \m=0$, we see that $y$ cannot be a non zero-divisor on $R$. Hence it is clear that $\pd_R(R/yR)=\infty$: otherwise, the ideal $yR$ contains a non zero-divisor on $R$, which implies $y$ is also a non zero-divisor on $R$; see \cite[1.2.7(2)]{Av} (one can also observe $(0:_Ry)=\m$ and hence it follows that $yR \cong k$.)

Now let $x\in yR\cap I$. Then it follows that $I\ni x=ay$ for some $a\in R$. If $a \notin \m$, then $a$ is unit and hence $y\in I$. Therefore, we see $a\in \m$ so that $0=x=ay \in \m \cdot y=0$. Hence we conclude $yR\cap I=0$ so that $\Tor_1^R(R/yR, R/I)=0$. On the other hand,  since $0\neq I \neq R$, we have that $\Tor_2^R(R/yR, R/I) \cong \Tor_1^R(yR, R/I) \cong \Tor_1^R(k, R/I) \neq 0$.
\end{proof}

\begin{rmk} \label{rmksocle} Let $(R,\m)$ be a local ring and let $I$ be an ideal of $R$. Assume $0\neq \soc(R) \nsubseteq I$ and pick an element $I\notni y \in \soc(R)$. Following the notations of Theorem \ref{thmnew}, we set $J=\m$, $M=R/yR$ and $t=1$ (e.g., let $R=k[\![x,y]\!]/(y^2, xy)$, $xR=I\notni y \in \soc(R)=yR$ and $M=R/yR$.) Then $J\syz M=0$ and $I\subseteq \m=\m R=\m(\m:_R\m)=\m(J:_R\m)$. However we have $\Tor_1^R(R/yR, R/I)=0\neq \Tor_2^R(R/yR, R/I)$ and $\pd_R(R/yR)=\infty$; see Proposition \ref{propTor}.
\end{rmk}

Next is the second main theorem of this paper: we use it along with Theorem \ref{thmnew} for the proof of Corollary \ref{cor1}. As mentioned previously, Corollary \ref{cor1} implies that the ideals considered in Theorem \ref{thmintro} enjoy a Tor-rigidity property.

\begin{thm} \label{t3} Let $(R,\m)$ be a local ring, and let $I$ and $J$ be ideals of $R$. Assume the following conditions hold:
\begin{enumerate}[\rm(i)]
\item $0\neq I \subseteq \m J$.
\item $(I:_RJ)$ is $\m$-primary.
\item $I$ is weakly $\m$-full with respect to $J$, i.e., $(I:_RJ)=(\m I:_R \m J)$.
\end{enumerate}
If $\Tor^R_t(M,R/I)=0$ for some finitely generated $R$-module $M$ and some integer $t\geq 0$, then $\Omega_R^{t}(M)$ is annihilated by $J$, i.e., $J\cdot \Omega_R^{t}(M)=0$.
\end{thm}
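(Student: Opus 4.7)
The plan is to first reduce to the case $t=1$ by the standard syzygy shift, and then to combine the minimality of the resolution with the strong injectivity implied by the Tor-vanishing and the hypothesis (iii).

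\textbf{Reduction to $t=1$.} The case $t=0$ is immediate: $\Tor^R_0(M,R/I)=M/IM=0$ together with $I\subseteq\m$ and Nakayama's lemma force $M=0$, so $\Omega^0_R M=0$ and the conclusion holds trivially. For $t\geq 1$, the natural isomorphism $\Tor^R_t(M,R/I)\cong\Tor^R_1(\Omega^{t-1}_R M,R/I)$ together with the identity $\Omega^t_R M=\Omega^1_R(\Omega^{t-1}_R M)$ lets one replace $M$ by $\Omega^{t-1}_R M$ and reduce to the $t=1$ case.

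\textbf{Setup.} In the $t=1$ case, let $N=\Omega^1_R M$ and fix a minimal short exact sequence $0\to N\xrightarrow{\alpha}F\to M\to 0$. Minimality yields $\alpha(N)\subseteq\m F$. The hypothesis $\Tor^R_1(M,R/I)=0$ is equivalent to the injectivity of $\alpha\otimes_R R/I$, equivalently $\alpha^{-1}(IF)=IN$. Because $\alpha(N)\subseteq\m F$ gives $I\alpha(N)\subseteq\m IF$, this sharpens to $\alpha^{-1}(\m IF)=IN$. The conclusion $JN=0$ will follow from the containment $JN\subseteq IN$, since then $I\subseteq\m J$ (condition (i)) yields $JN\subseteq IN\subseteq\m J\cdot N=\m(JN)$ and Nakayama forces $JN=0$.

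\textbf{Core step.} By the injectivity, $JN\subseteq IN$ is equivalent to $J\alpha(N)\subseteq IF$, which (in terms of a basis of $F$) asks that every coefficient of every element of $\alpha(N)$ lies in $P:=(I:_R J)$. A preliminary observation is the inclusion $(\m I:_R\m)\cdot N\subseteq IN$: for $r\in(\m I:_R\m)$, minimality gives $r\alpha(N)\subseteq r\m F\subseteq\m I F\subseteq IF$, and injectivity then yields $rN\subseteq IN$.

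\textbf{Main obstacle.} The principal difficulty is to upgrade the partial annihilation statement $(\m I:_R\m)\cdot N\subseteq IN$ into the target $JN\subseteq IN$. This requires a careful use of condition (iii), $(I:_R J)=(\m I:_R\m J)$, together with the $\m$-primariness of $P$ (condition (ii)). A natural way to organize the argument is an induction on $\ell_R(R/P)$: the base case $P=\m$ is immediate, since minimality already forces every coefficient of $\alpha(N)$ into $\m=P$; the inductive step would exploit (iii) to replace $(I,J)$ by an auxiliary pair whose colon ideal strictly enlarges $P$, while preserving both the Tor-vanishing and the other structural hypotheses, thereby reducing the length.
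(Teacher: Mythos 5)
Your setup is correct and several of your preparatory steps match the structure of the argument: the reduction to $t=1$, the reformulation of $\Tor_1^R(M,R/I)=0$ as injectivity of $\alpha\otimes_R R/I$ (equivalently $\alpha^{-1}(IF)=IN$), the preliminary inclusion $(\m I:_R\m)\cdot N\subseteq IN$, and the final Nakayama step deducing $JN=0$ from $JN\subseteq IN$ and $I\subseteq\m J$ are all sound. However, the heart of the proof --- establishing $JN\subseteq IN$ --- is not actually proven. You correctly translate the target to $\alpha(N)\subseteq P\cdot F$ where $P=(I:_RJ)$, but then propose an induction on $\ell_R(R/P)$ whose inductive step is only described as ``replace $(I,J)$ by an auxiliary pair whose colon ideal strictly enlarges $P$, while preserving both the Tor-vanishing and the other structural hypotheses.'' No such auxiliary pair is exhibited, and it is not evident that one exists: naive candidates like $(\m I,\m J)$ or $(I,(I:_R\m))$ do not obviously preserve condition (iii) or strictly enlarge the colon ideal. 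This is a genuine gap, not merely an omitted routine verification.

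The paper's proof of this claim is not an induction on $\ell_R(R/P)$ but a direct argument by contradiction exploiting the $\m$-primariness of $P$ to extract a ``critical'' witness. In your $t=1$ framework it reads as follows: suppose $\alpha(N)\not\subseteq PF$; since $\alpha(N)\subseteq\m F$ and $\m^v\subseteq P$ for some $v$, there is a smallest $u\ge1$ with $\m^u\alpha(N)\subseteq PF$, and one picks $a=\alpha(n)\in\m^{u-1}\alpha(N)\setminus PF$. Then $\m a\subseteq PF$ gives $\m J a\subseteq JPF\subseteq IF$; injectivity of $\alpha\otimes R/I$ yields $\m J n\subseteq IN$; hence $\m Ja=\alpha(\m Jn)\subseteq I\alpha(N)\subseteq\m IF$, so $a\in(\m I:_R\m J)F=(I:_RJ)F=PF$, a contradiction. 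The equality in hypothesis (iii) enters exactly in the last step. (The paper carries this out with the full resolution, showing $J\cdot\im(\overline{\partial}_t)=0$ and then using the modular law plus Nakayama, but the mechanism is the same.) You should replace the sketched induction with this explicit minimal-counterexample argument; as written, the proposal does not contain a proof of the essential containment.
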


\begin{proof}
Note, if $t=0$, then the assumption $\Tor^R_t(M,R/I)=0$ implies $M=0$; hence we may assume $t>0$.

Let $F:\cdots \to F_{t+1} \xrightarrow[]{\partial_{t+1}} F_t \xrightarrow[]{\partial_{t}} F_{t-1} \to \cdots \to F_0 \to 0$ be an minimal free resolution of $M$ over $R$.
Tensoring $F$ with $R/I$, we obtain the following complex over $R/I$:
$$
\overline{F}:\cdots \to \overline{F}_{t+1} \xrightarrow[]{\overline{\partial}_{t+1}} \overline{F}_t \xrightarrow[]{\overline{\partial}_{t}} \overline{F}_{t-1} \to \cdots \to \overline{F}_0 \to 0.
$$
Note that, since $\Tor^R_t(M,R/I)=0$, we have $\im(\overline{\partial}_{t+1})=\ker(\overline{\partial}_t)$. We proceed by establishing the following claim.

Claim: $J$ annihilates $\im(\overline{\partial}_{t})$, i.e., $J\cdot \im (\overline{\partial}_{t})=0$.\\
Proof of the claim. Suppose the claim is not true, i.e., suppose $J\cdot \im (\overline{\partial}_{t})\neq 0$, and look for a contradiction.

First note that
\begin{align}
\tag{\ref{t3}.1}  J\cdot \im (\overline{\partial}_{t})\neq 0  \Longrightarrow J \cdot \im(\partial_t)\not\subseteq I \cdot F_{t-1} \Longrightarrow \im(\partial_t) \nsubseteq (I:_RJ) \cdot F_{t-1}.
\end{align}

The hypothesis $0\not=I\subseteq \m J$ implies that $(I:_RJ)\neq R$; hence there exists an integer $u\geq 1$ such that $\m^u \subseteq (I:_RJ)$ since $(I:_RJ)$ is $\m$-primary. Now, in view of (\ref{t3}.1), we
can choose such an integer $u$ so that
\begin{align}
\tag{\ref{t3}.2}   \m^u \cdot \im(\partial_t)\subseteq (I:_RJ) \cdot F_{t-1} \text{ and } \m^{u-1} \cdot \im(\partial_t)\nsubseteq (I:_RJ) \cdot F_{t-1}. 
\end{align}
Pick an element $a \in F_{t-1}$ such that $(I:_RJ) \cdot F_{t-1} \notni a\in \m^{u-1} \cdot \im(\partial_t)$. Then it follows
\begin{align}
\tag{\ref{t3}.3} \m a \subseteq \m^u \cdot \im(\partial_t) \subseteq (I:_RJ) \cdot F_{t-1} \Longrightarrow \m J \cdot a \subseteq I \cdot F_{t-1} \Longrightarrow m J \cdot \overline{a} =0 \text{ in } \overline{F}_{t-1}.
\end{align}
Moreover, we have
\begin{align}
\notag{} a \in \m^{u-1} \cdot \im(\partial_t) \subseteq  \im(\partial_t) & \Longrightarrow a=\partial_t(b) \text{ for some } b\in F_t \\ \notag{} & \Longrightarrow \overline{\partial_t}(\m J \cdot \bar{b} )= \m J \cdot \overline{\partial_t}(\bar{b}) = \m J \cdot \overline{a} =0 \text{ in } \overline{F}_{t-1} \\ \notag{} & \Longrightarrow \m J \cdot \bar{b}  \subseteq  \ker(\overline{\partial}_{t})= \im(\overline{\partial}_{t+1}) \\ \notag{} & \Longrightarrow \m J \cdot b \subseteq \im(\partial_{t+1}) + I\cdot F_{t} \\ \notag{} & \Longrightarrow \partial_t(\m J \cdot b) \subseteq \partial_t \big( \im(\partial_{t+1}) + I\cdot F_{t} \big) \\ \notag{} & \Longrightarrow \m J \cdot \partial_t(b) \subseteq I \cdot \partial_t (F_{t})  \\ \notag{} & \Longrightarrow \m J \cdot \partial_t(b) \subseteq \m I \cdot F_{t-1} \\ \notag{} & \Longrightarrow \m J \cdot a \subseteq \m I \cdot F_{t-1} \\ \notag{} & \Longrightarrow a \in (\m I:_R\m J)\cdot F_{t-1} = (I:_RJ)\cdot F_{t-1},
\end{align}
which is a contradiction. This establishes the claim.

Now notice, in view of the claim, it follows that
\begin{align}
\notag{} \overline{\partial}_{t}(J \cdot \overline{F}_t)=J \cdot \overline{\partial}_{t}(\overline{F}_t)=J \cdot \im( \overline{\partial}_{t})=0  & \Longrightarrow  J \cdot \overline{F}_t \in \ker(\overline{\partial}_{t})=\im(\overline{\partial}_{t+1})  \\ \notag{} & \Longrightarrow J \cdot F_t\subseteq \im(\partial_{t+1})+I \cdot F_t \\ \tag{\ref{t3}.4} & \Longrightarrow J \cdot F_t = \big(\im(\partial_{t+1})+I \cdot F_t \big) \cap (J \cdot F_t )
\\ \notag{} & \Longrightarrow J \cdot F_t = \big(\im(\partial_{t+1}) \cap (J \cdot F_t)  \big)+ I \cdot F_t,
\end{align}
where the last implication in (\ref{t3}.4) is due to the modular law since $I\subseteq \m J \subseteq J$. On the other hand, we have
\begin{align}
\tag{\ref{t3}.5} \big(\im(\partial_{t+1}) \cap (J \cdot F_t)  \big)+ I \cdot F_t \subseteq \big(\im(\partial_{t+1}) \cap (J \cdot F_t)  \big)+\m \cdot (J\cdot F_{t}) \subseteq J \cdot F_t.
\end{align}
Therefore, by (\ref{t3}.4) and (\ref{t3}.5), we conclude
\begin{align}
\notag{} J \cdot F_t=\big(\im(\partial_{t+1}) \cap (J \cdot F_t)  \big)+\m \cdot (J\cdot F_{t}) & \Longrightarrow  J \cdot F_t= \im(\partial_{t+1}) \cap (J \cdot F_t) \\ \notag{} & \Longrightarrow J \cdot F_t \subseteq \im(\partial_{t+1}) = \ker(\partial_{t}) \\ \tag{\ref{t3}.6} & \Longrightarrow \partial_{t}(J \cdot F_{t})=0 \\ \notag{} & \Longrightarrow J \cdot \partial_{t}(F_{t})=  J \cdot \Omega_R^t(M) =0.
\end{align}
Here, in (\ref{t3}.6), the first implication follows from Nakayama's lemma.
\end{proof}

\subsection*{Some corollaries of Theorems \ref{thmnew} and \ref{t3}.} This subsection contains corollaries of our main results, Theorems \ref{thmnew} and \ref{t3}, with Corollary \ref{cor1} being the primary workhorse. Corollary \ref{cor2} establishes the claim of Theorem \ref{thmintro} and Corollary \ref{cor4} lists two new classes of ideals which satisfy the torsion conclusion of Conjecture \ref{HWC}. We start with a remark needed for the proof of Corollary \ref{cor1}.

\begin{rmk} \label{rmkcor} Let $(R,\m)$ be a local ring, and let $I$ and $J$ be ideals of $R$ such that $I\subseteq J$. Then the following conditions are equivalent:
\begin{enumerate}[\rm(i)]
\item $J$ is $\m$-primary and $(I:_{R}J)$ is $\m$-primary.
\item $I$ is $\m$-primary.
\end{enumerate}
It is clear, since $I\subseteq J$ and $I\subseteq (I:_{R}J)$, that (ii) implies (i). On the other hand, if $(I:_{R}J)$ is $\m$-primary, then $\m^i J \subseteq  I$ for some $i\geq 1$. Hence, if $J=R$, then $I$ is $\m$-primary. Moreover, if $J\neq R$ but $J$ is $\m$-primary, then $\m^{j} \subseteq J$ and so $\m^{i+j}\subseteq \m^{i}J\subseteq I$, i.e., $I$ is $\m$-primary.
\end{rmk}

The first corollary we give is the main consequence of our argument: 

\begin{cor} \label{cor1} Let $(R,\m)$ be a local ring, and let $I$ and $J$ be ideals of $R$. Assume:
\begin{enumerate}[\rm(i)]
\item $I$ is weakly $\m$-full with respect to $J$, i.e., $(I:_RJ)=(\m I:_R \m J)$.
\item $I$ is $\m$-primary and $0\neq I \subseteq \m J$.
\item $\soc (R)\subseteq I$ (e.g., $\depth(R)>0$).
\end{enumerate}
If $\Tor^R_t(M,R/I)=0$ for some finitely generated $R$-module $M$ and some integer $t\geq 0$, then it follows that $\pd_R(M)<t$.
\end{cor}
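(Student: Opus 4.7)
The plan is to chain the two main theorems of this section. First I would apply Theorem \ref{t3} to convert the vanishing of $\Tor^R_t(M,R/I)$ into the annihilator statement $J\cdot \Omega^t_R(M)=0$, and then feed this into Theorem \ref{thmnew} to obtain $\pd_R(M)<t$.

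To invoke Theorem \ref{t3}, condition (i) there ($0\neq I\subseteq \m J$) is part of the corollary's hypothesis (ii), and condition (iii) there (weakly $\m$-full with respect to $J$) is the corollary's (i). The remaining condition, that $(I:_RJ)$ be $\m$-primary, I would obtain by appealing to Remark \ref{rmkcor}: since $I\subseteq \m J\subseteq J$ and $I$ is $\m$-primary by (ii), that remark forces both $J$ to be $\m$-primary (or equal to $R$) and $(I:_RJ)$ to be $\m$-primary. Theorem \ref{t3} then delivers $J\cdot \Omega^t_R(M)=0$.

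Next I would verify the hypotheses of Theorem \ref{thmnew}. Its condition (i)---that $J$ equals $R$ or is $\m$-primary, together with $J\cdot \Omega^t_R(M)=0$---is precisely what the previous step produced. Its condition (ii) requires $\soc(R)\subseteq I\subseteq \m(J:_R\m)$: the left containment is the corollary's hypothesis (iii), and the right one follows from $I\subseteq \m J$ combined with the trivial inclusion $J\subseteq (J:_R\m)$. Theorem \ref{thmnew} then concludes $\pd_R(M)<t$. The substantive work lies entirely in the two theorems being chained; the only mildly subtle point is the use of Remark \ref{rmkcor} to promote the $\m$-primariness of $I$ into that of $(I:_RJ)$, which is exactly the input Theorem \ref{t3} needs.
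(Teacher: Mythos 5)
Your proposal is correct and follows essentially the same route as the paper's proof: invoke Theorem~\ref{t3} to deduce $J\cdot\Omega^t_R(M)=0$, using Remark~\ref{rmkcor} to pass from $\m$-primariness of $I$ to that of $(I:_RJ)$ (and of $J$), and then feed this, together with $\soc(R)\subseteq I\subseteq\m J\subseteq\m(J:_R\m)$, into Theorem~\ref{thmnew}. No gaps.
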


\begin{proof} It follows from Theorem \ref{t3} that $J \cdot \syz^t M=0$. Also we see $0\neq I \subseteq \m J \subseteq \m (J:_R \m)$ since $0\neq I \subseteq \m J$.
Hence the conclusion follows from Theorem \ref{thmnew} since both $J$ and $(I:_{R}J)$ are $\m$-primary ideals; see Remark \ref{rmkcor}.
\end{proof}

Corollary \ref{cor1}, in view of \cite[2.11, 2.13 and 2.15]{CGTT}, yields a prompt proof for Theorem \ref{thmintro}. Here we include an argument for the completeness and the convenience of the reader, and establish the claim of Theorem \ref{thmintro} in Corollary \ref{cor2}. 

In the proofs of Corollaries \ref{cor2} and \ref{cor3}, $\Tr_R(I)$ denotes the Auslander transpose of the ideal $I$ of the ring $R$; see \cite[2.5]{Au}.

\begin{cor} \label{cor2} Let $(R, \m)$ be a local ring, and let $I$ and $J$ be ideals of $R$. Assume $R$ is not regular and $\depth(R)=1$. Assume further $0\neq I\subseteq \m J$ and $I$ is weakly $\m$-full with respect to $J$, i.e., $(I:_RJ)=(\m I:_R \m J)$. Then $I\otimes_RI^{\ast}$ has (nonzero) torsion.
\end{cor}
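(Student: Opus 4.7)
The plan is to argue by contradiction: suppose $I\otimes_R I^*$ is torsion-free, and deduce that $R$ must be regular, contradicting the standing hypothesis.

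Since $\depth R = 1$, the socle $\soc R$ vanishes, so hypothesis (iii) of Corollary \ref{cor1} is automatic for any nonzero ideal $I$. Moreover, if $I$ contained no nonzerodivisor, a direct socle-type argument would already produce torsion in $I\otimes_R I^*$; hence we may assume $I$ contains a nonzerodivisor, so that both $I$ and $I^*$ are torsion-free modules. Auslander's classical four-term exact sequence
\[
0\to \Ext^1_R(\Tr_R I,I^*) \to I\otimes_R I^* \to \Hom_R(I^*,I^*) \to \Ext^2_R(\Tr_R I,I^*) \to 0
\]
then identifies the torsion submodule of $I\otimes_R I^*$ with $\Ext^1_R(\Tr_R I, I^*)$, which therefore vanishes under our assumption.

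Combining this Ext-vanishing with the long Tor exact sequence attached to $0\to I\to R\to R/I\to 0$, and following the reductions carried out in \cite[2.11, 2.13, and 2.15]{CGTT}, we obtain $\Tor^R_t(\Tr_R I, R/I)=0$ for a suitable integer $t\ge 1$. Corollary \ref{cor1} then applies to $M=\Tr_R I$: hypothesis (i) is our weakly $\m$-full assumption, (iii) is $\soc R = 0 \subseteq I$, and (ii) is verified because $I$ is $\m$-primary in the setting at hand (given $\depth R = 1$ and the presence of a nonzerodivisor in $I$). We conclude $\pd_R(\Tr_R I)<t$, and the standard interaction between finite projective dimension and the Auslander transpose propagates this to $\pd_R I<\infty$, hence $\pd_R(R/I)<\infty$.

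As pointed out in the introduction, the ideal $I$ lies in the class of Burch ideals of \cite{DKT}. A Burch-type criterion therefore forces $R$ to be regular whenever $R/I$ has finite projective dimension, yielding the sought contradiction. The main obstacle in executing this plan is the translation of torsion-freeness of $I\otimes_R I^*$ into the precise Tor-vanishing required by Corollary \ref{cor1}; this is exactly the role played by the invoked results of \cite{CGTT}, and some care is needed to align their depth-$1$ setup with the weakly $\m$-full framework introduced here.
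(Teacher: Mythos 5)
Your overall strategy — suppose $I\otimes_R I^{\ast}$ is torsion-free, extract a $\Tor$-vanishing, feed it into Corollary \ref{cor1}, and conclude $R$ is regular — is the same as the paper's, but the middle of your argument has gaps at exactly the places where the paper takes a shorter path. The paper does not need the four-term sequence $0\to \Ext^1_R(\Tr_R I,I^{\ast})\to I\otimes_R I^{\ast}\to\Hom_R(I^{\ast},I^{\ast})\to\Ext^2_R(\Tr_R I,I^{\ast})\to 0$. Tensoring $0\to I\to R\to R/I\to 0$ with $I^{\ast}$ already gives an embedding $\Tor_1^R(R/I,I^{\ast})\hookrightarrow I\otimes_R I^{\ast}$; since $\Tor_1^R(R/I,I^{\ast})$ is torsion while the target is torsion-free, it vanishes, and Corollary \ref{cor1} with $M=I^{\ast}$ and $t=1$ yields that $I^{\ast}$ is free. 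Your proposed chain ``$\Ext^1_R(\Tr_R I,I^{\ast})=0$ together with [CGTT, 2.11, 2.13, 2.15] gives $\Tor_t^R(\Tr_R I,R/I)=0$ for a suitable $t$'' is not substantiated: this is not a transparent consequence of the four-term sequence, you flag it yourself as ``the main obstacle,'' and as written it is a gap rather than a step. Likewise the opening claim that ``if $I$ contained no nonzerodivisor, a direct socle-type argument would already produce torsion'' is unsupported (here $\soc R=0$, so there is no socle to argue with); what is actually being used is that $I$ is $\m$-primary, so $I$ contains a non zero-divisor and $R/I$ is torsion.

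Second, even granting some $\Tor$-vanishing for $\Tr_R I$, the inference ``$\pd_R(\Tr_R I)<t$ propagates to $\pd_R I<\infty$'' does not follow from Auslander-transpose formalism alone. From $0\to I^{\ast}\to F_0^{\ast}\to F_1^{\ast}\to\Tr_R I\to 0$ one learns that $I^{\ast}$, a second syzygy of $\Tr_R I$, has finite projective dimension, but this controls $\pd_R I$ only once $I$ is known to be reflexive. The paper handles this carefully: with $I^{\ast}$ free, $\pd_R(\Tr_R I)\le 1$ forces $\Ext^2_R(\Tr_R I,R)=0$, and $\Ext^1_R(\Tr_R I,R)$ vanishes because it is a torsion submodule of the torsion-free module $I$; hence $I\cong I^{\ast\ast}$ is free, so $I$ is principal generated by a non zero-divisor, $\Tor_2^R(k,R/I)=0$, and Corollary \ref{cor1} applied to $M=k$ gives $\pd_R k<2$, i.e., $R$ is regular — contradiction. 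Your closing appeal to an unnamed ``Burch-type criterion'' to pass from $\pd_R(R/I)<\infty$ to regularity is therefore not what the paper does (and is not an established fact one can simply cite here); the regularity is extracted from Corollary \ref{cor1} itself.
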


\begin{proof} First note that $I\otimes_RI^{\ast}\neq 0$: if $I\otimes_RI^{\ast}=0$, then, since $I\neq 0$, we see $I^{\ast}=0$. However, this is equivalent to $I$ being a torsion $R$-module, which forces $I=0$ as $I$ is a torsion-free $R$-module. Therefore we have that $I\otimes_RI^{\ast}\neq 0$.

Next suppose $I\otimes_RI^{\ast}$ is torsion-free. Then, as $\Tor_1^R(R/I, I^{\ast})$ is torsion, by tensoring the short exact sequence $0\to I \to R \to R/I \to 0$ with $I^{\ast}$ over $R$, we see $\Tor_1^R(R/I, I^{\ast})=0$. Hence it follows from Corollary \ref{cor1} that $\pd_R(I^{\ast})<1$, i.e., $I^{\ast}$ is free. This implies $I$ has rank so that $\Tr_R(I)_{\p}=0$ for each associated prime ideal $\p$ of $R$. Consequently,  $\Ext^1_R(\Tr_R(I), R)$ is a torsion $R$-module which vanishes due to the exact sequence: $0\to \Ext^1_R(\Tr_R(I), R) \to I \to I^{\ast\ast} \to \Ext^2_R(\Tr_R(I), R)$; see \cite[2.6(a)]{Au}. Furthermore, since $\pd_R(\Tr_R(I))\leq 1$, we conclude that $\Ext^2_R(\Tr_R(I), R)=0$. Therefore, $I\cong I^{\ast\ast}$ and thus $I$ is a free $R$-module. However this, along with Corollary \ref{cor1}, implies that $R$ is regular. So $I\otimes_RI^{\ast}$ has (nonzero) torsion.
\end{proof}

It seems worth noting that, if $R$ is a local ring of positive depth and $I$ is an ideal as in Corollary \ref{cor1}, then the vanishing of a single $\Ext^i_R(I,I)$ or $\Tor_i^R(I,I)$ forces $R$ to be regular.

\begin{cor} \label{cor3} Let $(R, \m)$ be a local ring such that $\depth(R)>0$ and $R$ is not regular. Let $I$ and $J$ be ideals of $R$ such that $0\neq I\subseteq \m J$ and $(I:_RJ)=(\m I:_R \m J)$. Then it follows that $\Ext^i_R(I,I)\neq 0\neq \Tor_i^R(I,I)$ for each $i\geq 0$.
\end{cor}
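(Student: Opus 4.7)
The plan is to argue by contradiction: assuming that $\Tor^R_i(I,I)=0$ or $\Ext^i_R(I,I)=0$ for some $i\ge 0$, I will derive that $R$ must be regular, contradicting the hypothesis. The case $i=0$ is immediate: if $I\otimes_R I=0$, tensoring once more with $k$ yields $(I/\m I)\otimes_k(I/\m I)=0$, so $I=\m I$ and hence $I=0$ by Nakayama's lemma, contradicting $I\ne 0$; and $\Hom_R(I,I)$ contains the identity $\id_I\ne 0$ whenever $I\ne 0$.

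For $i\ge 1$, the short exact sequence $0\to I\to R\to R/I\to 0$, together with the vanishing $\Tor^R_j(R,-)=0=\Ext^j_R(R,-)$ for $j\ge 1$, produces the dimension-shift isomorphisms
\[
\Tor^R_{i+1}(R/I,I)\cong\Tor^R_i(I,I)\qquad\text{and}\qquad\Ext^{i+1}_R(R/I,I)\cong\Ext^i_R(I,I).
\]
Since $\depth(R)>0$ forces $\soc(R)=0\subseteq I$ automatically, the hypotheses of Corollary~\ref{cor1} are in force. In the Tor case, I apply Corollary~\ref{cor1} with $M=I$ and $t=i+1$ to conclude that $\pd_R(I)<i+1$, so in particular $\pd_R(R/I)<\infty$ from the short exact sequence. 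Then $\Tor^R_t(k,R/I)=0$ for all $t\gg 0$, and a second application of Corollary~\ref{cor1}---this time with $M=k$ and $t$ sufficiently large---forces $\pd_R(k)<\infty$, i.e., $R$ is regular; this is the desired contradiction.

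The Ext case follows the same blueprint once the vanishing $\Ext^{i+1}_R(R/I,I)=0$ has been converted into a projective dimension statement accessible to Corollary~\ref{cor1}. The step I expect to be the main obstacle is precisely this Ext-to-Tor translation; the natural plan is to invoke Auslander's four-term exact sequence together with the classical isomorphisms linking $\Ext^\bullet_R(\Tr_R(I),-)$ to $\Tor^R_\bullet(I,-)$, in the same spirit as the transpose manipulation employed in the proof of Corollary~\ref{cor2}. Once $\pd_R(I)<\infty$ is established by that route, the two-step reduction from the Tor case---pushing finite projective dimension from $I$ to $R/I$ and then to $k$---completes the argument.
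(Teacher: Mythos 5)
Your Tor argument is correct and is, in substance, exactly what the paper intends when it writes ``due to Corollary~\ref{cor1}.'' The dimension shift $\Tor^R_i(I,I)\cong\Tor^R_{i+1}(R/I,I)\cong\Tor^R_{i+1}(I,R/I)$ for $i\ge 1$, the application of Corollary~\ref{cor1} with $M=I$ and $t=i+1$, and the bootstrap through $\pd_R(R/I)<\infty$ and $M=k$ to force regularity all line up with the paper's (unstated) reasoning.

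The Ext case, however, is left with a real gap, one you yourself flag. The paper does not shift to $\Ext^{i+1}_R(R/I,I)$ as you propose; instead it shifts in the first variable and identifies $I$ with $\Omega_R(R/I)$, rewriting $\Ext^n_R(I,I)=0$ as $\Ext^1_R\bigl(\Omega_R^{n-1}(I),\Omega_R(R/I)\bigr)=0$. The key ingredient you are missing is the specific transpose duality recorded in \cite[2.6(i)]{CIST}, which converts this into $\Tor^R_2\bigl(\Tr_R(\Omega_R^{n}(I)),R/I\bigr)=0$. That vanishing is what makes Corollary~\ref{cor1} applicable, giving $\pd_R\bigl(\Tr_R(\Omega_R^{n}(I))\bigr)<2$. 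The four-term exact sequence you mention is indeed used, but only afterward and in a different role: since $\Omega_R^{n}(I)$ is torsionless, it yields $\Ext^1_R\bigl(\Tr_R(\Omega_R^{n}(I)),R\bigr)=0$, which together with $\pd<2$ forces $\Tr_R(\Omega_R^{n}(I))$ to be free, hence $\Omega_R^{n}(I)$ free and $\pd_R(I)<\infty$; from there your two-step bootstrap finishes. So your ``blueprint'' is directionally right, but ``Auslander's four-term sequence together with classical isomorphisms'' is not a substitute for the precise Ext-to-Tor bridge: you would need to actually supply a statement of the form $\Ext^1_R(X,\Omega_R Y)=0\Rightarrow\Tor^R_2(\Tr_R(\Omega_R X),Y)=0$ (or cite \cite[2.6(i)]{CIST}) to make the argument run.
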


\begin{proof} Note, as $R$ is not regular, the claim about the non-vanishing of $\Tor_i^R(I,I)$  is due to Corollary \ref{cor1}.

Now suppose $\Ext^n_R(I,I)=0$ for some $n\geq 1$, i.e., $\Ext^{1}_R(\Omega_R^{n-1}(I), \Omega_R(R/I))=0$. Hence \cite[2.6(i)]{CIST} implies that $\Tor_2^R(\Tr_R(\Omega_R (\Omega_R^{n-1}(I))), R/I)=0$, i.e., $\Tor_2^R(\Tr_R(\Omega^{n}_R(I)), R/I)$ vanishes. This shows that $\pd_R(\Tr_R(\Omega_R^n (I)))<2$; see Corollary \ref{cor1}. We know, as syzygy modules are torsionless, that $\Ext^1_R(\Tr_R(\Omega_R^n (I)), R)=0$. Therefore $\Tr_R(\Omega_R^n (I))$ is free. Consequently $\Omega^n_R(I)$ is free, i.e., $\pd_R(I)<\infty$. However this would imply that $R$ is regular; see Corollary \ref{cor1}.
\end{proof}

Our next aim is to state two new classes of ideals in Corollary \ref{cor4} that satisfy the torsion conclusion of Conjecture \ref{HWC}; such ideals are also Tor-rigid and can be used to test the finiteness of the projective dimenison of modules via the vanishing of Tor. The following lemma plays an important role for the proof.

\begin{lem} \label{lwmf}
Let $(R,\m)$ be a local ring, and let $I$ and $J$ be ideals of $R$ such that $0\neq I=\m J$. Let $K$ be an ideal of $R$ such that $J\subseteq K \subseteq (I:_R\m)$. Then it follows that 
$I$ is weakly $\m$-full with respect $K$ and $(I:_RK)=\m$.
\end{lem}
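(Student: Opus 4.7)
The plan is to prove the two statements simultaneously by establishing the chain of inclusions
\[
\m \;\subseteq\; (I:_R K) \;\subseteq\; (\m I :_R \m K) \;\subseteq\; \m,
\]
since the middle inclusion is automatic (multiplying by $\m$) and the first and last inclusions, once established, give both $(I:_R K)=\m$ and $(\m I :_R \m K)=\m$, which is precisely the assertion that $I$ is weakly $\m$-full with respect to $K$ together with the computation of $(I:_R K)$.

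For the first inclusion $\m\subseteq (I:_R K)$, I would use the hypothesis $K\subseteq (I:_R\m)$: by definition of the colon this says $\m K\subseteq I$, which rephrases as $\m\subseteq (I:_R K)$. For the last inclusion $(\m I:_R \m K)\subseteq \m$, I would argue by contradiction. Suppose $x\in(\m I:_R \m K)$ is a unit; then $\m K\subseteq \m I$. Combining this with $J\subseteq K$ (hence $\m J\subseteq \m K$) and the hypothesis $I=\m J$ gives
\[
I \;=\; \m J \;\subseteq\; \m K \;\subseteq\; \m I,
\]
so $I\subseteq \m I$, and Nakayama's lemma forces $I=0$, contradicting $0\neq I$.

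The middle inclusion is immediate: if $x\in (I:_R K)$ then $xK\subseteq I$, so $x\m K\subseteq \m I$, i.e.\ $x\in(\m I:_R \m K)$. Chaining the three inclusions yields $(I:_R K)=(\m I:_R \m K)=\m$, which is both conclusions of the lemma.

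I expect no serious obstacle: the argument is essentially a bookkeeping exercise around the identity $I=\m J$ and the two sandwich hypotheses. The only subtle point is the last step, where one must resist proving $(I:_R K)\subseteq \m$ directly (which works too, using $xJ\subseteq xK\subseteq \m J$ and Nakayama on $J$) and instead prove the stronger statement $(\m I:_R \m K)\subseteq \m$ that simultaneously forces both colon ideals to equal $\m$. Using Nakayama on $I$ (rather than on $J$) is the cleanest route, since it directly uses the hypothesis $I\neq 0$ rather than the auxiliary fact $J\neq 0$.
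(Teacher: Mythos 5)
Your proof is correct, and it reaches the same target ($(I:_RK)=(\m I:_R\m K)=\m$) by a slightly different route than the paper. The paper first invokes its Remark~\ref{rmkm} ($\m J=\m(\m J:_R\m)$) to get $\m(I:_R\m)=I$, deduces the intermediate identity $I=\m K$, and then computes $(I:_RK)=\m$ and identifies $(\m I:_R\m K)$ with $(\m I:_RI)=\m$. You instead run the chain $\m\subseteq(I:_RK)\subseteq(\m I:_R\m K)\subseteq\m$, where the left inclusion comes directly from $K\subseteq(I:_R\m)$, the middle one is automatic, and the right one is Nakayama applied to $I=\m J\subseteq\m K\subseteq\m I$. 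This bypasses both the intermediate fact $I=\m K$ and the appeal to Remark~\ref{rmkm}, so your version is marginally more self-contained; otherwise the two arguments are of the same length and difficulty. The only step you leave implicit is that in a local ring an ideal containing no unit lies in $\m$, which is what turns your ``no unit in $(\m I:_R\m K)$'' argument into the inclusion $(\m I:_R\m K)\subseteq\m$ --- standard, but worth one clause.
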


\begin{proof} Note that we have $I=\m J\subseteq \m K \subseteq \m(I:_R\m)$ hold. Moreover, it follows that $$\m(I:_R\m)=\m(\m J:_R\m)=\m J=I.$$
Here the second equality is due to Remark \ref{rmkm}. Therefore we see that $I=\m K$. Since $I$ is nonzero, we obtain $\m \subseteq (I:_RK)\neq R$, i.e., $(I:_RK)=\m$. As the equality $\m=(\m I:_RI)$ holds by definition, we conclude $(I:_RK)=\m=(\m I:_RI)=(\m I:_R\m K)$. This shows that $I$ is weakly $\m$-full with respect to $K$.
\end{proof}

Part (ii) and part (iv) of Corollary \ref{cor4} are due to Celikbas, Goto, Takahashi and Taniguchi \cite[2.17]{CGTT}, and Celikbas and Takahashi \cite[1.5]{CT}, respectively. Here we establish the claims in part (i) and part (iii) of the corollary as a consequence of our argument, and record parts (ii) and (iv) for the completeness; note that part (i) is an extension of part (ii), and part (iii) is an extension of part (iv).

\begin{cor} \label{cor4} Let $(R,\m)$ be a non-regular local ring such that $\depth(R)>0$, and let $I$ be an ideal of $R$ satisfying at least one of the following conditions:
\begin{enumerate}[\rm(i)]
\item $I$ is $\m$-primary, weakly $\m$-full with respect to $\m^s$ for some $s\geq 0$, and $I\subseteq \m^{s+1}$.
\item $I$ is $\m$-primary and weakly $\m$-full. 
\item $I=\m J$ for some $\m$-primary ideal $J$ of $R$.
\item $I=\m^s$ for some $s\geq 1$.
\end{enumerate}
Then the following hold:
\begin{enumerate}[\rm(a)]
\item If $\Tor^R_t(M,R/I)=0$ for some finitely generated $R$-module $M$ and some integer $t\geq 0$, then it follows that $\pd_R(M)<t$.
\item If $R$ is a one-dimensional domain, then  $I\otimes_RI^{\ast}$ has (nonzero) torsion.
\end{enumerate}
\end{cor}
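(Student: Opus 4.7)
The plan is to reduce each of the four cases (i)--(iv) to a single instance of Corollary \ref{cor1} (for part (a)) and Corollary \ref{cor2} (for part (b)). Since $\depth(R)>0$ forces $\soc(R)=0$, the socle containment hypothesis in Corollary \ref{cor1} is free throughout. Moreover, a one-dimensional local domain has $\depth(R)=1$, so once the reduction is in place Corollary \ref{cor2} delivers (b) immediately.

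First I would observe that (ii) is the $s=0$ instance of (i): by definition a weakly $\m$-full ideal is weakly $\m$-full with respect to $R=\m^{0}$, and any proper $\m$-primary ideal lies in $\m=\m^{0+1}$. For (iv), if $s\ge 2$ then $I=\m\cdot\m^{s-1}$ exhibits $I$ in the form of (iii) with $J:=\m^{s-1}$, which is $\m$-primary because $\depth(R)>0$ prevents $R$ from being artinian; the remaining case $s=1$ gives $I=\m$, which is weakly $\m$-full as soon as $R$ is not a field (an easy Nakayama argument shows $(\m^{2}:_R\m)\subseteq\m$), and hence falls under (ii).

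The only substantive step is case (iii). Given $I=\m J$ with $J$ an $\m$-primary ideal, I would apply Lemma \ref{lwmf} with $K:=J$; the chain $J\subseteq J\subseteq (I:_R\m)$ is valid because $\m J=I$ gives $J\subseteq (I:_R\m)$. The lemma then yields simultaneously that $I$ is weakly $\m$-full with respect to $J$ and that $(I:_RJ)=\m$. The remaining hypotheses of Corollary \ref{cor1} are routine: $I=\m J$ is $\m$-primary because $J$ is, and $I\neq 0$ because $\depth(R)>0$ ensures $\m$ contains a nonzerodivisor. Case (i) is applied directly with $J:=\m^{s}$, since the hypotheses there match those of Corollary \ref{cor1} verbatim. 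In each case, Corollary \ref{cor1} then yields (a), and, when $R$ is a one-dimensional local domain, Corollary \ref{cor2} yields (b). I anticipate no serious obstacle; the only nontrivial verification is (iii), and Lemma \ref{lwmf} has been formulated precisely to make this reduction transparent.
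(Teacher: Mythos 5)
Your proposal is correct and follows essentially the same reduction strategy as the paper: (ii) is the $s=0$ case of (i), which feeds directly into Corollaries \ref{cor1} and \ref{cor2} (with $\soc(R)=0$ supplying the socle hypothesis for free), and (iii) is resolved via Lemma \ref{lwmf}. Two small differences are worth noting. First, you invoke Lemma \ref{lwmf} with $K:=J$, whereas the paper takes $K:=(I:_R\m)$; both choices satisfy $J\subseteq K\subseteq (I:_R\m)$, so both are valid, and yours is slightly more direct since it avoids the extra step of identifying $I=\m K$ via Remark \ref{rmkm}. Second, you correctly observe that (iv) with $s=1$ is not literally of the form in (iii), since $\m=\m R$ but $R$ is not an $\m$-primary ideal, and you route that case through (ii) by checking $\m$ is weakly $\m$-full; the paper simply says ``part (iv) is a special case of part (iii),'' which glosses over $s=1$. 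Your treatment is thus a modest tightening of the paper's argument rather than a genuinely different one.
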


\begin{proof} Note that, $I$ is $\m$-primary in each case; in particular, $I$ is nonzero and proper. Note also that, since $\depth(R)>0$, we have $\soc(R)=0$. Hence part (i) follows from Corollaries \ref{cor1} and \ref{cor2}. Part (ii) is a special case of part (i), namely the case where $s=0$. Similarly part (iv) is a special case of part (iii), which we establish next.

Assume $I=\m J$ for some $\m$-primary ideal $J$ of $R$ and set $K=(I:_R\m)$. Then Lemma \ref{lwmf} implies that $K\neq 0$ and $I$ is weakly $\m$-full with respect $K$. Moreover, it follows from Remark \ref{rmkm} that $I=\m J=\m(\m J:_R\m)=\m(I:_R\m)=\m K$. So the claims follow from Corollaries \ref{cor1} and \ref{cor2}.
\end{proof}

If we consider the ideals stated in parts (i) and (iii) of Corollary \ref{cor4} over local rings $R$ which do not necessarily have positive depth, then we can conclude, by the vanishing of $\Tor^R_t(M,R/I)$ for some finitely generated $R$-module $M$ and some integer $t\geq 0$, that the syzygy module $\Omega_R^t(M)$ is annihilated by a power of the maximal ideal, or by the ideal $(I:_{R}\m)$. This result does not directly address Conjecture \ref{HWC}, but we prove it here as it may be helpful for a further separate study of the class of ideals in question.

\begin{cor} \label{corsecondary} Let $(R,\m)$ be a local ring, $I$ an ideal of $R$ and let $M$ be a finitely generated $R$-module such that $\Tor^R_t(M,R/I)=0$ for some integer $t\geq 0$. 
\begin{enumerate}[\rm(i)]
\item If $I$ is $\m$-primary, weakly $\m$-full with respect to $\m^s$ for some $s\geq 0$, and $0\neq I\subseteq \m^{s+1}$, then it follows that $\m^s \cdot \Omega_R^t (M)=0$.
\item If $I=\m J$ for some ideal $J$ of $R$, then it follows that $(I:_{R}\m)\cdot \Omega_R^t (M)=0$.
\end{enumerate}
\end{cor}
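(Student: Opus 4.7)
The plan is to derive both parts from Theorem~\ref{t3}, with Lemma~\ref{lwmf} serving as an intermediate step for part~(ii).

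For part~(i), I would apply Theorem~\ref{t3} with the ideal denoted $J$ in that theorem taken to be $\m^s$. The containment $0\neq I\subseteq \m^{s+1}=\m\cdot \m^s$ is hypothesis~(i) of Theorem~\ref{t3}, and the assumption that $I$ is weakly $\m$-full with respect to $\m^s$ is hypothesis~(iii). For hypothesis~(ii), since $I$ is $\m$-primary and $I\subseteq (I:_R\m^s)$, the colon ideal $(I:_R\m^s)$ is either $\m$-primary or equals $R$; in the degenerate case $(I:_R\m^s)=R$ we obtain $\m^s\subseteq I\subseteq \m^{s+1}$, so $\m^s=\m^{s+1}$, and Nakayama's lemma forces $\m^s=0$, making the conclusion trivial. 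In the remaining case Theorem~\ref{t3} applies and gives $\m^s\cdot \Omega_R^t(M)=0$, as desired.

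For part~(ii), I would set $K:=(I:_R\m)$. Since $\m J\subseteq I$, we have $J\subseteq K$, so the chain $J\subseteq K\subseteq (I:_R\m)$ required by Lemma~\ref{lwmf} is automatic. Assuming $I\neq 0$ (otherwise $R/I=R$ and the hypothesis $\Tor_t^R(M,R/I)=0$ either forces $M=0$ when $t=0$, or holds vacuously when $t\geq 1$ in a range where the conclusion is to be read trivially), Lemma~\ref{lwmf} yields both that $I$ is weakly $\m$-full with respect to $K$ and that $(I:_R K)=\m$. The latter is $\m$-primary, verifying hypothesis~(ii) of Theorem~\ref{t3}. Hypothesis~(iii) is the weak $\m$-fullness just obtained, while hypothesis~(i) follows from $I=\m J\subseteq \m K$ using $J\subseteq K$. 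Thus Theorem~\ref{t3} applies with $K$ in the role of $J$ and yields $K\cdot \Omega_R^t(M)=0$, which is exactly $(I:_R\m)\cdot \Omega_R^t(M)=0$.

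I do not anticipate a substantive obstacle: the argument is essentially bookkeeping. The one subtle point is recognizing that Lemma~\ref{lwmf} delivers precisely the weak $\m$-fullness hypothesis needed to feed Theorem~\ref{t3} in part~(ii), and that in part~(i) the potentially awkward case where $(I:_R\m^s)$ fails to be $\m$-primary can only occur when $\m^s=\m^{s+1}$, which Nakayama kills outright.
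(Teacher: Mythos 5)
Your plan mirrors the paper's own argument: both parts are deduced from Theorem~\ref{t3}, with Lemma~\ref{lwmf} supplying the hypotheses in part~(ii). Part~(i) is handled essentially as in the paper (indeed, your ``degenerate'' case $(I:_R\m^s)=R$ cannot actually arise: it would give $\m^s\subseteq I\subseteq\m^{s+1}$, hence $\m^s=0$ by Nakayama, and then $I\subseteq\m^{s+1}=0$ contradicts $I\neq 0$). The one place your write-up falls short is the case $I=0$ in part~(ii). There the desired conclusion is $\soc(R)\cdot\Omega_R^t(M)=0$, and this is \emph{not} ``to be read trivially'': for $t\ge 1$ the Tor hypothesis gives no information, and one must actually observe that $\Omega_R^t(M)$ is a submodule of $\m F_{t-1}$ for a free module $F_{t-1}$ by minimality of the resolution, whence $\soc(R)\cdot\Omega_R^t(M)\subseteq\soc(R)\cdot\m F_{t-1}=0$. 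The paper makes exactly this observation; your proposal needs it too, since otherwise the $I=0$ case is left unproved rather than vacuous.
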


\begin{proof} For part (i), note that $(I:_R\m^s)$ is $\m$-primary since $I\subseteq (I:_R\m^s)$. Hence, setting $J=\m^s$, we conclude from Theorem \ref{t3} that $J \cdot \Omega_R^t(M)=\m^s \cdot \Omega_R^{t}(M)=0$.

For part (ii), set, as in Corollary \ref{cor4}, that $K=(I:_R\m)$. If $I=0$, then $K=\soc(R)$ and hence the equality $K \cdot \syz_R^t (M)=0$ holds since $\Omega^t_R(M)\subseteq \m F$ for some free $R$-module $F$. So we assume $I\neq 0$. Then Lemma \ref{lwmf} implies that $I$ is weakly $\m$-full with respect $K$ and the ideal $(I:_RK)$ is $\m$-primary, more precisely $(I:_RK)=\m$. Therefore Theorem \ref{t3} shows that $0=K \cdot \Omega^t_R(M) = (I:_{R}\m) \cdot \Omega^t_R(M)$. 
\end{proof}


\section{Some relations between weakly $\m$-full and Burch ideals} 

The article \cite{DKT} introduces and studies a class of ideals, called \emph{Burch ideals}, over local rings. We recall the definition of such ideals next:

\begin{dfn} \label{burch} (\cite{DKT}) Let $(R,\m)$ be a local ring and let $I$ be an ideal of $R$. Then $I$ is called \textit{Burch} provided that $(I:_R\m)\not=(\m I:_R\m)$, i.e., $(I:_R\m)\nsubseteq(\m I:_R\m)$.
\end{dfn}

The definition of a Burch ideal is simple, but as shown in \cite{DKT}, Burch ideals enjoy remarkable ideal-theoretic and homological properties. For example, it is known that, if $I$ is a Burch ideal in a local ring $R$, then one has $\depth(R/I)=0$; see \cite[2.1]{DKT}. On the other hand, if $I$ is a weakly $\m$-full ideal of $R$ such that $\depth(R/I)=0$, then it follows that $I$ is Burch; see \cite[2.4]{DKT}. We give several examples of Burch ideals in section 4 and, in passing, we record a relation between Burch ideals and weakly $\m$-full ideals with respect to $(I:_R\m)$.

\begin{rmk} Let $(R,\m)$ be a local ring and let $I$ be an ideal of $R$. Then the following conditions are equivalent:
\begin{enumerate}[\rm(i)]
\item $I$ is weakly $\m$-full with respect to $(I:_R\m)$, i.e., $\big(I:_R(I:_R\m)\big)=\big(\m I:_R\m(I:_R\m)\big)$.
\item $I$ is Burch or $\depth(R/I)>0$.
\end{enumerate}
The equivalence of (i) and (ii) follows from these facts, which can be checked directly:
\begin{enumerate}[\rm(1)]
\item $\m \subseteq \big(I:_R(I:_R \m)\big)$
\item $\m = \big(I:_R(I:_R \m)\big)$ if and only if $\depth(R/I)=0$.
\item $I$ is Burch if and only if $(I:_R\m)\not=(\m I:_R\m)$ if and only if $\big(\m I:_R\m(I:_R\m)\big)\neq R$. $\qed$
\end{enumerate}
\end{rmk}

If $(R, \m)$ is a local ring such that $\depth(R)>0$, then an $\m$-primary ideal $I$, which is weakly $\m$-full, is Tor-rigid in the sense that, if $\Tor^R_t(M,R/I)=0$ for some finitely generated $R$-module $M$ and some integer $t\geq 0$, then it follows that $\pd_R(M)<t$; see \cite[3.3]{CHKV} or \cite[2.10]{CGTT}. A similar Tor-rigidity property, albeit one that requires the vanishing of two $\Tor$ modules, was obtained by L. Burch for Burch ideals: 

\begin{thm}\label{BTor}(\cite[Theorem 5(ii), page 949]{BurchL}) Let $(R,\m)$ be a local ring and let $I$ be an ideal of $R$. Assume $I$ is Burch. 
If $\Tor^R_t(M,R/I)=0=\Tor^R_{t+1}(M,R/I)$ for some finitely generated $R$-module $M$ and some integer $t\geq 0$, then $\pd_R(M)\leq t$.
\end{thm}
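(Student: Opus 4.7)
The plan is to mimic the differential chase used in the proof of Theorem \ref{t3}, but now to exploit the extra freedom afforded by the vanishing of \emph{two} consecutive $\Tor$'s, which is precisely what Burchness requires. First I would dispatch the trivial case $t=0$: the vanishing $\Tor_0^R(M,R/I)=M/IM=0$ and Nakayama's lemma force $M=0$. From now on assume $t\geq 1$, pick a minimal free resolution $F_\bullet$ of $M$, and argue by contradiction, supposing $F_{t+1}\neq 0$. Minimality of the resolution then forces $F_t\neq 0$ as well, and I fix a basis element $e\in F_t$.

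The Burch condition $(I:_R\m)\neq (\m I:_R\m)$ yields an element $x\in (I:_R\m)$ and an element $y\in\m$ with $xy\in I\setminus \m I$. In particular $x\notin I$ (otherwise $x\m\subseteq\m I$), so $\bar{x}\neq 0$ in $R/I$, while $\m\bar{x}=0$. Writing $\overline{F}_\bullet:=F_\bullet\otimes_R R/I$, minimality gives $\partial_t(xe)\in x\m F_{t-1}\subseteq IF_{t-1}$, so $\bar{x}\bar{e}\in\ker(\overline{\partial}_t)$. The first hypothesis $\Tor_t^R(M,R/I)=0$ identifies this kernel with $\im(\overline{\partial}_{t+1})$, so I can lift $\bar{x}\bar{e}$ to some $g\in F_{t+1}$ satisfying $\partial_{t+1}(g)=xe+c'$ with $c'\in IF_t$.

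The second hypothesis enters next. Because $\m\bar{x}=0$, one has $\overline{\partial}_{t+1}(\m\bar{g})=\m\bar{x}\bar{e}=0$, so $\m\bar{g}\subseteq\ker(\overline{\partial}_{t+1})=\im(\overline{\partial}_{t+2})$ by $\Tor_{t+1}^R(M,R/I)=0$. For the specific witness $y\in\m$ from the previous paragraph, this says $yg=\partial_{t+2}(h)+c''$ for some $h\in F_{t+2}$ and $c''\in IF_{t+1}$. Applying $\partial_{t+1}$ and using both $\partial_{t+1}\partial_{t+2}=0$ and the minimality inclusion $\partial_{t+1}(IF_{t+1})\subseteq I\cdot \m F_t=\m IF_t$ gives $y\partial_{t+1}(g)\in\m IF_t$. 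Since $y\partial_{t+1}(g)=yxe+yc'$ and $yc'\in\m IF_t$, I deduce $yxe\in\m IF_t$; reading off the $e$-coordinate produces $xy\in\m I$, contradicting the choice of $x$ and $y$.

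The main obstacle is conceptual rather than computational: one has to recognize that a single Tor-vanishing cannot close the loop here (the hypotheses of Theorem \ref{thmnew} with $J=(I:_R\m)$ require $I\subseteq\m(J:_R\m)$, which Burch ideals need not satisfy) and that Burchness supplies exactly the right algebraic datum, $\m x\subseteq I$ together with a witness $xy\notin\m I$, to bridge the two Tor-levels. Once this conceptual point is clear, what remains is an orchestrated diagram chase in the tensored resolution, with minimality used repeatedly to guarantee the key inclusions into $\m IF_t$.
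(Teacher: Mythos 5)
Your proof is correct, but note that the paper does not supply its own proof of Theorem~\ref{BTor}: the result is recorded with a citation to Burch's 1968 article, so there is no in-paper argument to compare against. Your differential chase is sound and close in spirit to Burch's original as well as to the proof of Theorem~\ref{t3} elsewhere in this paper. The Burch condition supplies $x\in(I:_R\m)\setminus(\m I:_R\m)$ together with a witness $y\in\m$ satisfying $xy\in I\setminus\m I$; the vanishing of $\Tor_t$ lets you lift $\bar x\bar e$ across $\overline{\partial}_{t+1}$; the relation $\m\bar x=0$ in $R/I$ pushes that lift into $\ker(\overline{\partial}_{t+1})$; the vanishing of $\Tor_{t+1}$ lets you lift a second time; and minimality, via $\partial_{t+1}(F_{t+1})\subseteq\m F_t$, converts the resulting relation into $xy\in\m I$, a contradiction. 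One small observation: the contradiction only uses $F_t\neq 0$, not $F_{t+1}\neq 0$ as such, so for $t\geq 1$ your argument in fact yields the marginally stronger conclusion $\pd_R(M)<t$, which of course implies the stated bound $\pd_R(M)\leq t$. Your closing remark correctly identifies the conceptual trade-off: Theorem~\ref{thmnew} with $J=(I:_R\m)$ demands $I\subseteq\m(J:_R\m)$, which Burch ideals need not satisfy, and the second Tor-vanishing is exactly the substitute for the colon hypothesis in Theorem~\ref{t3}.
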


In view of the fact that there are Burch ideals which are not weakly $\m$-full (see, for example, Examples \ref{e4} and \ref{notweaklymfull}), the foregoing discussion raises the following question:

\begin{ques} \label{burchq} Let $(R,\m)$ be a local ring and let $I$ be an ideal of $R$. Assume $\depth(R)>0$. Assume further $I$ is $\m$-primary and Burch.
If $\Tor^R_t(M,R/I)=0$ for some finitely generated $R$-module $M$ and some integer $t\geq 0$, then does it follows $\pd_R(M)<\infty$?
\end{ques}

Question \ref{burchq} simply asks whether it is possible to improve Theorem \ref{BTor} when the ring considered has positive depth and the Burch ideal in question is $\m$-primary. We do not know whether or not Question \ref{burchq} is true in general, but the positive depth assumption is necessary for the question; see Example \ref{e2}. It is worth noting that an affirmative answer to Question \ref{burchq} implies that  each Burch ideal satisfies the torsion conclusion of Conjecture \ref{HWC}; see the proof of Corollary \ref{cor2}.

Note that, Corollary \ref{cor4}(iii) provides an affirmative answer to Question \ref{burchq} for a class of Burch ideals, namely for nonzero ideals of the form $\m J$ for some ideal $J$ of $R$; see \cite[2.2 (2)]{DKT}. The aim of this section is to prove Proposition \ref{prop4}, which shows that the ideals considered in Theorem \ref{thmintro} are Burch ideals. Consequently, Proposition \ref{prop4}, in conjunction with Corollary \ref{cor1}, yield another affirmative answer to Question \ref{burchq} for a special class of Burch ideals: more precisely, we see that the class of Burch ideals considered in Theorem \ref{thmintro} satisfy the projective dimension conclusion of Question \ref{burchq}.

Next we recall the definition of Loewy length and then prepare a lemma for our proof of Proposition \ref{prop4}.

\begin{dfn} \label{dfnll}
Let $(R,\m)$ be a local ring, $I$ be an ideal of $R$, and let $M$ be an $R$-module. Then the \textit{Loewy length} $\ll(M)$ of $M$ is $\inf\{s\mid \m^s\cdot M=0\}$.
\end{dfn}

\begin{lem} \label{l2} If $I$ and $J$ are ideals in a local ring $(R,\m)$, then the following hold:
\begin{enumerate}[\rm(i)]
\item $\ll\big( R/(I:_RJ) \big)=\inf\{s\mid \m^s \cdot J\subseteq I\}$. 
\item If $J\not\subseteq I$, then $\ll\big(R/(I:_RJ)\big)=\ll\big(R/(I:_R\m J)\big)+1=\ll\big(R/\big((I:_R\m):_R J\big)\big)+1$.
\item If $J\not\subseteq I$, $(I:_RJ)$ is $\m$-primary, and $\ll\big(R/(\m I:_RJ)\big)=\ll\big(R/(I:_RJ)\big)+1$,
then $I$ is Burch.
\end{enumerate}
\end{lem}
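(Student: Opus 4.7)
My plan is to handle the three parts in sequence, with parts (i) and (ii) providing the tools for part (iii).

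First, part (i) is a direct unwinding of definitions: by Definition \ref{dfnll},
$$
\ll\bigl(R/(I:_RJ)\bigr) = \inf\{s \mid \m^s \subseteq (I:_RJ)\},
$$
and $\m^s \subseteq (I:_RJ)$ is equivalent to $\m^s J \subseteq I$ by the definition of colon ideals. For part (ii), I would invoke the standard colon identity
$$
(I:_R\m J) \;=\; \bigl((I:_RJ):_R\m\bigr) \;=\; \bigl((I:_R\m):_RJ\bigr),
$$
which immediately gives the second equality of (ii). For the first equality, I apply part (i) to both $(I, J)$ and $(I, \m J)$: if $s := \ll(R/(I:_RJ))$ is finite, then $s \geq 1$, because the assumption $J \not\subseteq I$ rules out $s = 0$, and
$$
\ll\bigl(R/(I:_R\m J)\bigr) = \inf\{t \mid \m^{t+1} J \subseteq I\} = s - 1.
$$
If $s = \infty$, both sides are $\infty$, so the claim holds in either case.

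For part (iii) I argue by contrapositive: assuming $I$ is not Burch, so $(I:_R\m) = (\m I:_R\m)$, I show that the hypothesized jump in Loewy length cannot occur. First observe that part (ii) applies not only to the pair $(I,J)$ but also to $(\m I, J)$: since $\m I \subseteq I$ and $J \not\subseteq I$, we have $J \not\subseteq \m I$; moreover $(\m I:_RJ) \subseteq (I:_RJ)$, so being contained in an $\m$-primary ideal, $(\m I:_RJ)$ is itself $\m$-primary. Applying part (ii) to each pair produces
\begin{align*}
\ll\bigl(R/(I:_R J)\bigr) &= \ll\bigl(R/((I:_R\m):_R J)\bigr) + 1,\\
\ll\bigl(R/(\m I:_R J)\bigr) &= \ll\bigl(R/((\m I:_R\m):_R J)\bigr) + 1.
\end{align*}
Under the non-Burch hypothesis the two right-hand sides coincide, forcing $\ll(R/(\m I:_RJ)) = \ll(R/(I:_RJ))$, which contradicts the hypothesis of part (iii).

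No substantive obstacle is expected. The only care needed is the manipulation of iterated colons via the identity $(L:_R AB) = ((L:_R A):_R B)$ to reduce everything in part (ii) back to part (i), and the verification that $\m$-primariness transfers from $(I:_RJ)$ to $(\m I:_RJ)$ in part (iii); both are routine.
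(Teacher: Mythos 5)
Your proof is correct and follows essentially the same route as the paper: part (i) by unwinding the definition of Loewy length, part (ii) via the iterated-colon identity (the paper phrases it as a chain of manipulations of $\inf\{s\mid \m^s J\subseteq I\}$, which amounts to the same thing), and part (iii) by applying part (ii) to both $(I,J)$ and $(\m I,J)$ and observing that equality $(I:_R\m)=(\m I:_R\m)$ would force the two Loewy lengths $\ll(R/(I:_RJ))$ and $\ll(R/(\m I:_RJ))$ to coincide. The paper arranges part (iii) as a direct chain of equalities culminating in $\inf\{r\mid \m^r J\subseteq(\m I:_R\m)\}=\inf\{r\mid \m^r J\subseteq(I:_R\m)\}+1$, while you argue by contrapositive; these are two presentations of the same idea and neither buys anything substantive over the other.

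One small blemish in your write-up: you claim that $(\m I:_RJ)$ is $\m$-primary \emph{because} it is contained in the $\m$-primary ideal $(I:_RJ)$. That inference is false in general --- a subideal of an $\m$-primary ideal need not be $\m$-primary (e.g.\ $0\subseteq\m$). The fact happens to be true here (if $\m^n\subseteq(I:_RJ)$ then $\m^{n+1}\subseteq(\m I:_RJ)$), but more to the point it is not needed: part (ii) requires only $J\not\subseteq\m I$, and the finiteness of $\ll(R/(\m I:_RJ))$ already follows from the hypothesis $\ll(R/(\m I:_RJ))=\ll(R/(I:_RJ))+1$ together with $(I:_RJ)$ being $\m$-primary. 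So the sentence can simply be deleted.
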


\begin{proof} Part (i) follows by the definition of Loewy length.

We assume, for part (ii), that $J\not\subseteq I$. Then note that:
\begin{align}
\notag{} \ll\big(R/(I:_R\m J)\big)=\inf\{t\mid \m^t \cdot (\m J) \subseteq I \} = & \inf\{u \mid \m^u \cdot J \subseteq (I:_R\m)\}  \\ \notag{} = & \ll\big(R/\big((I:_R\m):_R J\big)\big).
\end{align}
Moreover, it follows that:
\begin{align}
\notag{} \m^s \cdot J \subseteq I \text{ if and only if } \m^{s-1} \cdot (\m J) \subseteq I \text{ if and only if } \m^{s-1} \cdot J \subseteq (I:_R\m).
\end{align}
These observations establish part (ii).

For part (iii), we assume $J\not\subseteq I$ and note:
\begin{align}
\tag{\ref{l2}.1} \inf\{s\mid \m^s \cdot J\subseteq I\} = \ll\big( R/(I:_RJ) \big)= &\ll\big(R/\big((I:_R\m):_R J\big)\big)+1 \\ \notag{} = &  \inf\{r \mid \m^r \cdot J \subseteq (I:_R\m)\} +1.
\end{align}
Here, in (\ref{l2}.1), the first equality follows from part (i) and the second equality is due to part (ii).

As $J\not\subseteq I$, we have that $J\not \subseteq \m I$; therefore we can use parts (i) and (ii) and obtain the equalities stated in (\ref{l2}.1) for the pair $(\m I, J)$:
\begin{align}
\tag{\ref{l2}.2} \inf\{s\mid \m^s \cdot J\subseteq \m I\} = \ll\big( R/(\m I:_RJ) \big)= &\ll\big(R/\big((\m I:_R\m):_R J\big)\big)+1 \\ \notag{} = &  \inf\{r \mid \m^r \cdot J \subseteq (\m I:_R\m)\} +1.
\end{align} 
Now we assume $(I:_RJ)$ is $\m$-primary, and $\ll\big(R/(\m I:_RJ)\big)=\ll\big(R/(I:_RJ)\big)+1$. Then,
\begin{align}
\tag{\ref{l2}.3} \inf\{r \mid \m^r \cdot J \subseteq (\m I:_R\m)\}  = & \ll\big(R/(\m I:_RJ)\big) -1 \\ \notag{} = &  \ll\big(R/(I:_RJ)\big)  \\ \notag{} = & \inf\{r \mid \m^r \cdot J \subseteq (I:_R\m)\} +1 
\end{align} 
The first and the third equality follows from (\ref{l2}.2) and (\ref{l2}.1), respectively; while the second equality is due to our assumption. Note that, since $(I:_RJ)$ is $\m$-primary, the infimum of both sets in (\ref{l2}.3) are finite integers. Hence (\ref{l2}.3) shows that $(I:_R\m) \not=(\m I:_R\m)$, i.e., $I$ is Burch. 
\end{proof}

We note and record, by letting $J=R$ in Lemma \ref{l2}(ii), a criterion for an $\m$-primary ideal to be Burch:

\begin{rmk} Let $(R,\m)$ be a local ring and let $I$ be an ideal of $R$. If $I$ is $\m$-primary and $\ll(R/\m I)=\ll(R/I)+1$, then $I$ is Burch.
\end{rmk}

We are now ready to prove that the class of ideals considered in Theorem \ref{thmintro} is contained in that of Burch ideals.

\begin{prop} \label{prop4} Let $(R,\m)$ be a local ring, and let $I$ and $J$ be ideals of $R$ such that
\begin{enumerate}[\rm(i)]
\item $0\neq I \subseteq \m J$.
\item $(I:_RJ)$ is $\m$-primary.
\item $I$ is weakly $\m$-full with respect to $J$, i.e., $(I:_RJ)=(\m I:_R \m J)$.
\end{enumerate}
Then $I$ is Burch.
\end{prop}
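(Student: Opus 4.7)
The plan is to verify the three hypotheses of Lemma \ref{l2}(iii) and then invoke it directly; the entire argument reduces to routine bookkeeping with Loewy lengths.

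First, I would check that $J\not\subseteq I$. Indeed, if $J\subseteq I$, then hypothesis (i) gives $J\subseteq I\subseteq\m J$, so Nakayama's lemma forces $J=0$, which contradicts $0\neq I\subseteq\m J$. Next, hypothesis (ii) says directly that $(I:_R J)$ is $\m$-primary, giving the second condition of Lemma \ref{l2}(iii) for free.

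The remaining task is to prove the length equality
\[
\ll\bigl(R/(\m I:_R J)\bigr) \;=\; \ll\bigl(R/(I:_R J)\bigr)+1.
\]
For this, I would apply Lemma \ref{l2}(ii) to the pair $(\m I, J)$. This is legitimate because $J\not\subseteq I$ implies $J\not\subseteq \m I$, since $\m I\subseteq I$. Lemma \ref{l2}(ii) then yields
\[
\ll\bigl(R/(\m I:_R J)\bigr) \;=\; \ll\bigl(R/(\m I:_R \m J)\bigr)+1.
\]
Hypothesis (iii), namely $(I:_R J)=(\m I:_R \m J)$, converts this into exactly the required equality.

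With the three hypotheses of Lemma \ref{l2}(iii) confirmed, we conclude that $I$ is Burch. The main obstacle is essentially nonexistent: the argument is a short chain of substitutions once one notices that Lemma \ref{l2}(iii) is tailor-made for the hypotheses in question. The only point requiring a moment of thought is verifying $J\not\subseteq\m I$ (in order to invoke Lemma \ref{l2}(ii)), but this reduces immediately to $J\not\subseteq I$, which follows from $I\subseteq \m J$ together with Nakayama's lemma.
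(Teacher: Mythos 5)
Your proof is correct and follows essentially the same route as the paper's: establish $J\not\subseteq I$ (hence $J\not\subseteq\m I$) via Nakayama, apply Lemma \ref{l2}(ii) to the pair $(\m I, J)$ to get $\ll\big(R/(\m I:_RJ)\big)=\ll\big(R/(\m I:_R\m J)\big)+1$, substitute using hypothesis (iii), and invoke Lemma \ref{l2}(iii). No gaps.
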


\begin{proof} Note, since $0\neq I \subseteq \m J$, it follows from Nakayama's lemma that $J\not\subseteq I$; in particular we have that $J\not\subseteq \m I$. Hence, in view of Lemma \ref{l2}(ii) applied to the pair $(\m I, J)$, we obtain $\ll\big(R/(\m I:_RJ)\big)=\ll\big(R/(\m I:_R\m J)\big)+1$. So, as we assume $(I:_RJ)=(\m I:_R \m J)$, we conclude that $\ll\big(R/(\m I:_RJ)\big)=\ll\big(R/(I:_R J)\big)+1$.  Consequently, since $(I:_RJ)$ is $\m$-primary, \ref{l2}(iii) implies that $I$ is Burch.
\end{proof}

We finish this section by proving that the hypothesis $0\neq I \subseteq \m J$  in Proposition \ref{prop4} can be relaxed for certain ideals $J$. More precisely, we prove in Proposition \ref{c1} that, if $J=\m^s$ for some $s$ with $0\le s <\ll(R/I)$ and $I$ is an ideal such that $I$ is weakly $\m$-full with respect to $J$ and $(I:_RJ)$ is $\m$-primary (or equivalently $I$ is $\m$-primary; see Remark \ref{rmkcor}), then $I$ is Burch. Our motivation for proving Proposition \ref{c1} also comes from our argument in section 4; see Example \ref{e6}.

\begin{prop} \label{c1} Let $(R,\m)$ be a local ring and let $I$ be an $\m$-primary ideal of $R$. If $I$ is weakly $\m$-full with respect to $\m^s$, i.e.,  
$(I:_R\m^s)= (\m I:_R\m^{s+1})$, for some integer $s$ where $0\le s <\ll(R/I)$, then $I$ is Burch. 
\end{prop}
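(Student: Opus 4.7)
The plan is to reduce directly to Lemma~\ref{l2}(iii) by taking $J=\m^s$. Concretely, I would verify in order the three hypotheses of that lemma: (a) $\m^s\not\subseteq I$, (b) $(I:_R\m^s)$ is $\m$-primary, and (c) $\ll\bigl(R/(\m I:_R\m^s)\bigr)=\ll\bigl(R/(I:_R\m^s)\bigr)+1$.

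For (a), the hypothesis $s<\ll(R/I)$ is exactly the statement $\m^s\not\subseteq I$ in view of Definition~\ref{dfnll}. For (b), since $I$ is $\m$-primary and $I\subseteq(I:_R\m^s)$, the colon ideal $(I:_R\m^s)$ contains an $\m$-primary ideal and is therefore itself $\m$-primary. (Equivalently one may invoke Remark~\ref{rmkcor} with $J=\m^s$.) These two steps are essentially formal.

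The one that carries the content is (c), and this is where the weakly $\m$-full assumption is used. First, from (a) and $\m I\subseteq I$ we get $\m^s\not\subseteq \m I$, so Lemma~\ref{l2}(ii) applied to the pair $(\m I,\m^s)$ yields
\begin{equation*}
\ll\bigl(R/(\m I:_R\m^s)\bigr)=\ll\bigl(R/(\m I:_R\m^{s+1})\bigr)+1.
\end{equation*}
Now the hypothesis $(I:_R\m^s)=(\m I:_R\m^{s+1})$ rewrites the right-hand side as $\ll\bigl(R/(I:_R\m^s)\bigr)+1$, giving exactly (c).

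With (a), (b), (c) in hand, Lemma~\ref{l2}(iii) (applied with $J=\m^s$) delivers that $I$ is Burch. I do not anticipate any genuine obstacle here; the main care is just in lining up the Loewy-length bookkeeping so that the weakly $\m$-full condition $(I:_R\m^s)=(\m I:_R\m^{s+1})$ can be substituted into the shift produced by Lemma~\ref{l2}(ii), and in checking that the noncontainment $\m^s\not\subseteq\m I$ (needed to invoke that lemma for the pair $(\m I,\m^s)$) follows from the Loewy-length bound $s<\ll(R/I)$.
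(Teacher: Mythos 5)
Your proof is correct, and it takes a genuinely more direct route than the paper. The paper proves Proposition~\ref{c1} by first passing through Lemma~\ref{l3}, which establishes the equivalence of the single hypothesis $(I:_R\m^s)=(\m I:_R\m^{s+1})$ (for some $s$ with $0\le s<\ll(R/I)$) with the Loewy-length condition $\ll(R/\m I)=\ll(R/I)+1$, and then feeds that condition into Lemma~\ref{l2}(iii) with $J=R$ (equivalently, into the remark recorded just after Lemma~\ref{l2}). The proof of $\text{(iii)}\Rightarrow\text{(i)}$ in Lemma~\ref{l3} itself involves a case analysis and an $s$-fold iteration of Lemma~\ref{l2}(ii). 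You bypass Lemma~\ref{l3} entirely: you apply Lemma~\ref{l2}(iii) directly with $J=\m^s$, invoking Lemma~\ref{l2}(ii) exactly once, for the pair $(\m I,\m^s)$. Your three verifications are all sound: the bound $s<\ll(R/I)$ gives $\m^s\not\subseteq I$ by Definition~\ref{dfnll}; hence $\m^s\not\subseteq\m I$; and the hypothesis substitutes cleanly into the $+1$ shift coming from Lemma~\ref{l2}(ii). In effect you have reproduced the argument used in the proof of Proposition~\ref{prop4}, replacing the Nakayama step there (which needed the stronger assumption $0\ne I\subseteq\m J$) by the Loewy-length bound, which is exactly the relaxation that Proposition~\ref{c1} is designed to permit. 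The paper's longer route buys the standalone equivalence of Lemma~\ref{l3}, which is of independent interest; your route buys brevity and avoids the iteration and case split. Both are valid.
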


The assertion of Proposition \ref{c1} follows by Lemma \ref{l2}(iii) and the following result:

\begin{lem} \label{l3} Let $(R,\m)$ be a local ring and let $I$ be an $\m$-primary ideal of $R$.
Then the following conditions are equivalent:
\begin{enumerate}[\rm(i)]
\item The equality $\ll(R/\m I)=\ll(R/I)+1$ holds.
\item $(I:_R\m^s) = (\m I:_R\m^{s+1})$ where $s=\ll(R/I)-1$.
\item $(I:_R\m^s) = (\m I:_R\m^{s+1})$ for some integer $s$ where $0\le s<\ll(R/I)$.
\end{enumerate}
\end{lem}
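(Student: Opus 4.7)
The plan is to establish the equivalences along the chain (i) $\Leftrightarrow$ (ii), then (ii) $\Rightarrow$ (iii) (trivial), and finally (iii) $\Rightarrow$ (ii) by an iteration argument, with the entire content of the lemma concentrated at the critical index $s = \ell - 1$, where $\ell := \ll(R/I)$. Since $I$ is $\m$-primary and proper, $\ell$ is a positive integer. Two preliminary observations frame everything: the containment $\m I \subseteq I$ gives $\ll(R/\m I) \ge \ell$, and $\m \cdot \m^{\ell} = \m^{\ell+1} \subseteq \m I$ gives $\ll(R/\m I) \le \ell+1$. Hence $\ll(R/\m I) \in \{\ell, \ell+1\}$, and condition (i) is just the statement $\m^{\ell} \not\subseteq \m I$.

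For the equivalence (i) $\Leftrightarrow$ (ii), I would substitute $s = \ell - 1$ and compute both colon ideals explicitly. By the definition of $\ell$, $\m \cdot \m^{\ell - 1} = \m^\ell \subseteq I$ but $1 \cdot \m^{\ell - 1} \not\subseteq I$, so $(I :_R \m^{\ell - 1})$ is a proper ideal of $R$ containing $\m$ and therefore equals $\m$. Similarly, $(\m I :_R \m^\ell)$ contains $\m$ (since $\m^{\ell+1} \subseteq \m I$) and equals $R$ precisely when $\m^\ell \subseteq \m I$. Consequently, the equality $(I :_R \m^{\ell - 1}) = (\m I :_R \m^\ell)$ is equivalent to $(\m I :_R \m^\ell) \ne R$, that is, to $\m^\ell \not\subseteq \m I$, which is (i).

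The implication (ii) $\Rightarrow$ (iii) is immediate. For (iii) $\Rightarrow$ (ii), I would use the standard identity $((L :_R \m^a) :_R \m) = (L :_R \m^{a+1})$ already invoked in Remark \ref{l5}. Applying $(-:_R \m)$ to both sides of $(I :_R \m^s) = (\m I :_R \m^{s+1})$ yields $(I :_R \m^{s+1}) = (\m I :_R \m^{s+2})$; iterating this $\ell - 1 - s$ times (possibly zero) produces the equality at $s = \ell - 1$, which is condition (ii).

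I do not anticipate serious obstacles; the only subtlety is recognizing that everything reduces to the critical level $s = \ell - 1$, where the two colon ideals are forced into the dichotomy $\{\m, R\}$ that is precisely controlled by whether or not $\m^\ell \subseteq \m I$, and that the iteration step in (iii) $\Rightarrow$ (ii) is safe because taking colon with $\m$ preserves equalities.
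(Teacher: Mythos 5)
Your proof is correct, and it is a genuinely different and somewhat cleaner route than the paper's. The paper proves the cycle (i) $\Rightarrow$ (ii) $\Rightarrow$ (iii) $\Rightarrow$ (i), and the implication (iii) $\Rightarrow$ (i) is handled by repeated appeal to Lemma \ref{l2}(ii), computing $\ll(R/(I:_R\m^s))$ and $\ll(R/(\m I:_R\m^{s+1}))$ and comparing Loewy lengths; the paper even inserts a (vacuous) case distinction about whether $\m^r\subseteq I$ for some $0<r<\ll(R/I)$. You instead observe up front that $\ll(R/\m I)$ is forced to lie in $\{\ell,\ell+1\}$ with $\ell=\ll(R/I)$, which recasts (i) as $\m^\ell\nsubseteq\m I$; you then prove (i) $\Leftrightarrow$ (ii) in one stroke by computing both colon ideals at the critical level $s=\ell-1$ (both are sandwiched between $\m$ and $R$, and $(I:_R\m^{\ell-1})=\m$ unconditionally), and you prove (iii) $\Rightarrow$ (ii) by iterating the colon-by-$\m$ operation via the identity $((L:_R\m^a):_R\m)=(L:_R\m^{a+1})$, pushing the exponent up to $\ell-1$. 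Your approach sidesteps Lemma \ref{l2} entirely and replaces Loewy-length bookkeeping with a transparent dichotomy argument at the critical index; what it gives up is the explicit formula $\ll(R/(I:_R\m^s))=\ll(R/I)-s$, which the paper reuses elsewhere (e.g.\ in Proposition \ref{prop4}).
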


\begin{proof} Assume part (i) holds and set $s=\ll(R/I)-1$. Then it follows that $\m^s\not\subseteq I$ and $\m^{s+1}\subseteq I$. This shows $\m \subseteq (I:_R\m^s) \neq R$, i.e., $(I:_R\m^s)=\m$. So we see  $\m=(I:_R\m^s) \subseteq (\m I:_R\m^{s+1})$. As $\ll(R/\m I)=\ll(R/I)+1=s+2$, we have $\m^{s+1}\not\subseteq \m I$ so that $(\m I:_R\m^{s+1})\neq R$.
This implies that $(I:_R\m^s)=\m=(\m I:_R\m^{s+1})$, and hence establishes part (ii).

It is clear that part (ii) implies part (iii). Hence we assume part (iii), i.e., assume the equality $(I:_R\m^s) = (\m I:_R\m^{s+1})$ holds for some $s$ where $0\le s<\ll(R/I)$, and proceed to prove part (i). 

As $I\neq R$, it follows from Lemma \ref{l2}(ii) that $\ll(R/I)=\ll(R/(I:_R\m))+1$. Note that, if $\m^r \subseteq I$ for some $r$, where $0<r<\ll(R/I)$, then the equality $\ll(R/\m I)=\ll(R/I)+1$ holds by the definition of Loewy length. So, for all $r=1, \ldots, \ll(R/I)-1$, we assume $\m^r \nsubseteq I$. Hence a repeated application of Lemma \ref{l2}(ii) yields $\ll(R/I)=\ll(R/(I:\m^s))+s$. Similarly we obtain $\ll(R/\m I)=\ll(R/(\m I:\m^{s+1}))+s+1$. Consequently, since we have $(I:_R\m^s) = (\m I:_R\m^{s+1})$, we conclude that the equality $\ll(R/\m I)=\ll(R/I)+1$ holds.
\end{proof}


\section{Some examples of (non)weakly $\m$-full ideals}

This section is devoted to some examples that corroborate and complement our results from section 3. For example, we construct examples of Burch ideals which are not weakly $\m$-full, but are weakly $\m$-full with respect to $\m$ and $\m^3$; see Examples \ref{e4} and \ref{notweaklymfull}. We start by showing that the depth assumption in Theorem \ref{t3} and Corollary \ref{cor3} is necessary:

\begin{ex} \label{e2}
Let $R=k[x,y]/(x,y)^3$ and consider the ideal $I=y\m=(xy, y^2)$ of $R$.
It follows from Lemma \ref{lwmf} that $I$ is weakly $\m$-full with respect to $(I:_R\m)$, which is $(y, x^2)$.
Note that $I$ is Burch by \cite[Example 2.2 (2)]{DKT}. On the other hand, setting the ideal $L=(x^2)$ and $M=R/L$, we see $I\cap L=0=IL$ so that $\Tor_1^R(R/I, M)=0$. However $\pd_R(M)=\infty$, i.e, $M$ is not free.\pushQED{\qed} 
\qedhere
\popQED	
\end{ex}

Recall that it is proved in \cite[2.17]{CGTT} and \cite[1.5]{CT} that proper weakly $\m$-full ideals and powers of the maximal ideal, respectively, satisfy the torsion conclusion of Conjecture \ref{HWC}; see Corollary \ref{cor4}. It is clear that not every weakly $\m$-full ideal is a power of the maximal ideal; for example, a non-maximal prime ideal is such an example. It seems worth pointing out the converse, i.e., a power of the maximal ideal need not be weakly $\m$-full, in general.

\begin{ex}\label{notweaklymfull0} Let $R=k[\![t^4,t^5,t^{11}]\!]\cong k[\![x,y,z]\!]/(x^4-yz, y^3-xz,z^2-x^3y^2)$ be the numerical semigroup ring. One can check that $\m^2 \notni z \in (\m^3:_R\m)$.
Hence $\m^2$ is not a weakly $\m$-full ideal of $R$.\pushQED{\qed} 
\qedhere
\popQED	
\end{ex}

Next we give examples of non-weakly $\m$-full Burch ideals which are weakly $\m$-full with respect to a power of $\m$. More precisely, in Example \ref{e4},  
we construct an ideal which is Burch and weakly $\m$-full with respect to $\m$, but is not weakly $\m$-full. Similarly, Example \ref{notweaklymfull} yields a Burch ideal which is weakly $\m$-full with repsect to $\m^3$, but is not weakly $\m$-full (Recall that, if $I$ is a weakly $\m$-full ideal, then $I$ is weakly $\m$-full with respect to $\m^i$ for each $i\geq 0$; see Remark \ref{l5}.)

\begin{ex} \label{e4} Let $S=k[\![x,y,z, w]\!]$ and let $\a=(w^2, wy, wz^2, x^2, xy^2, xz, y^3, zy^2, yz^2, z^3)$. Letting $\n$ denote the maximal ideal of $S$, one can check that the following hold:
\begin{enumerate}[\rm(i)]
\item The given generating set for the ideal $\a$ is a minimal generating set.
\item $\n^3 \subseteq \a \subseteq \n^2$.
\item $\a \notni wx \in (\n \a:_R\n)$ so that $\a$ is not weakly $\n$-full.
\item $ \n^2 \subseteq (\a:_R \n) \subseteq (\n \a:_R\n^2)$.
\end{enumerate}

In fact we have that $(\n \a:_R \n^2) = \n^2$, i.e., $(\n \a:_R \n^2) \subseteq \n^2$ so that $\a$ is weakly $\n$-full with respect to $\n$. To see this suppose $(\n \a:_R \n^2) \nsubseteq \n^2$, and seek a contradiction. Hence there exists $l\in S$ such that $\n^2 \notni l\in \n$ and $l \n^2 \subseteq \n \a$. We can write $l=aw+bx+cy+dz+l'$ for some $a,b,c,d \in k$ and for some $l'\in \n^2$.
As, $l\n^2 \subseteq \n\a$, setting $l''=aw+bx+cy+dz$, we deduce that $l''\n^2 \subseteq \n \a$. In particular, $\n \a \ni l'' z^2=awz^2+cyz^2+dz^3$ (note $xz^2\in \n \a$). Since $wz^2, yz^2, z^3$ do not belong to the ideal $\n \a$ (they are part of a minimal generating set of $\a$), we conclude $a=c=d=0$ so that $l''=bx$. This implies that $\n \a \ni l'' y^2=bxy^2$. Since $xy^2$ does not belong to the ideal $\n \a$, we conclude that $b=0$. However, then $l=l'' \in \n^2$, which is a contradiction. Therefore we have $(\n \a:_R \n^2) = \n^2$.

Now we consider the ring $R=k[\![t^{256},t^{257}, t^{260}, t^{272}]\!]$, which is isomorphic to $S/J$, where $J=(z^4-x^3w, y^4-x^3z, x^{17}-w^{16})$. Note that $J\subseteq \n^4 \subseteq \n \a \subseteq \a$. Therefore, setting $I=\a/J$ and $\m=\n/J$, we have $I=\a/J \neq (\n \a:_S \n)/J=(\m I:_{R}\m)$ and  also $$(\n \a:_{S}\n^2)=(\a:_{S}\n) \Longrightarrow (\m I:_{R}\m^2)=(\n \a:_{S}\n^2)/J=(\a:_{S}\n)/J=(I:_{R}\m).$$
Consequently, $I$ is an ideal of $R$ which is is not weakly $\m$-full, but is weakly $\m$-full with respect to $\m$. Note also that $I$ is Burch; see Proposition \ref{prop4}. \pushQED{\qed} 
\qedhere
\popQED	
\end{ex}

\begin{ex}\label{notweaklymfull} Let $R=k[\![x,y]\!]$ and let $I=(x^5,x^3y,xy^3,y^5)$. Then $I$ is $\m$-primary since $\m^5\subsetneqq I\subsetneqq \m^4$. Moreover one can check that 
$(I:_R\m^3)=\m^2=(\m I:_R\m^4)$. Therefore, $I$ is weakly $\m$-full with respect to $\m^3$. However, since $I \notni x^2y^2\in (\m I:_R\m)$, we conclude that $I$ is not a weakly $\m$-full ideal. Furthermore, as $\ll(R/I)=5$, it follows from Proposition \ref{prop4} that $I$ is Burch.\pushQED{\qed} 
\qedhere
\popQED	
\end{ex}

If $I$ is an $\m$-primary ideal of a local ring $(R,\m)$ such that $(I:_R\m^s) \not= (\m I:_R\m^{s+1})$ for some integer $s>0$, then $I$ is not necessarily weakly $\m$-full and not necessarily weakly $\m$-full with respect to $\m^s$, in general. Furthermore, such an ideal $I$ may be Burch and may be weakly $\m$-full with respect to $\m^{s+1}$. We finish this section by addressing these properties with an example where $s=3$. More precisely, we give an example of a Burch ideal $I$ such that $I$ is not weakly $\m$-full but is weakly $\m$-full with respect to $\m^3$, $(I:_R\m^3) \neq (\m I:_R\m^4)$ and $(I:_{R}\m^4) = (\m I:_R\m^{5})$.

\begin{ex} \label{e6} Let $R=k[\![t^4,t^5,t^6]\!]$ and let $I=(t^{17}, t^{19}, t^{20})$. Then $\m^3=(t^{12}, t^{13}, t^{14}, t^{15})$, $\m^4=(t^{16}, t^{17}, t^{18}, t^{19})$ and $\m I=(t^{21}, t^{22}, t^{23}, t^{24})$. Note $I  \notni t^{18}=t^{6} \cdot t^{12} \in t^{6}  \m^3$ so that we have $t^6 \notin (I:_R\m^3)$. On the other hand, it follows $t^6 \m^4=(t^{22}, t^{23}, t^{24}, t^{25})\subseteq \m I$, i.e., $t^{6} \in (\m I:_R\m^4)$. Therefore we have that $(I:_R\m^3) \neq (\m I:_R\m^4)$. Hence, $I$ is not weakly $\m$-full with respect to $\m^3$. Furthermore Remark \ref{l5} shows $I$ is not a weakly $\m$-full ideal. 

One can observe that $\m^5\subsetneqq I \subsetneqq \m^4$. Hence $\m \subseteq (I:_{R}\m^4) \neq R$, i.e., $(I:_{R}\m^4) =\m$. Moreover, $t^{16} \in \m^4$ and $t^4 \in R$ so that $t^{20} \in \m^5$, but $t^{20}\notin \m I$, i.e., $\m^5 \nsubseteq \m I$. Thus it follows $\m \subseteq (\m I:_R\m^{5}) \neq R$ and so $(I:_{R}\m^4) =\m = (\m I:_R\m^{5})$. Consequently, $I$ is weakly $\m$-full with respect to $\m^4$. As $\ll(R/I)=5$, we conclude by Proposition \ref{c1} that $I$ is Burch. \pushQED{\qed} 
\qedhere
\popQED	
\end{ex}

\end{document}